\definecolor{refkey}{rgb}{0.4,1,0.4}
\definecolor{labelkey}{rgb}{0.5,0.5,0.8}
\theoremstyle{plain}
\newtheorem{teo}{Theorem}[section]
\newtheorem{prop}[teo]{Proposition}
\newtheorem{lem}[teo]{Lemma}
\theoremstyle{definition}
\newtheorem{defin}[teo]{Definition}
\newtheorem{rem}[teo]{Remark}
\newtheorem{exam}[teo]{Example}
\numberwithin{equation}{section}
\newcommand{\R}{\ensuremath{\mathbb{R}}}
\newcommand{\id}{\ensuremath{\mathrm{id}}}
\newcommand{\cL}{\ensuremath{\mathcal{L}}}
\newcommand{\cD}{\ensuremath{\mathcal{D}}}
\newcommand{\cG}{\ensuremath{G}}
\newcommand{\X}{\ensuremath{\mathfrak{X}}}
\newcommand{\Met}{\ensuremath{\mathrm{Met}}}
\newcommand{\dist}{\ensuremath{\,\mathrm{dist}}}
\DeclareMathOperator*{\Div}{div}
\newcommand{\edit}[1]{#1}
\newcommand{\vp}{\varphi}                          
\definecolor{dgreen}{rgb}{0.14,0.4,0.14}
\title{\textbf{Shape analysis via gradient flows \\ on diffeomorphism groups}}
\author{Tracey Balehowsky$^1$ \and Carl-Joar Karlsson$^2$ \and Klas Modin$^{2,}$\footnote{Corresponding author: \href{mailto:klas.modin@chalmers.se}{\texttt{klas.modin@chalmers.se}}}}
\begin{document}
\maketitle

\noindent
$^1$ Department of Mathematics \& Statistics, University of Calgary, Canada\\[0.2em] 
$^2$ Department of Mathematical Sciences, Chalmers University of Technology and University of Gothenburg, Sweden


\begin{abstract}

We study a Riemannian gradient flow on Sobolev diffeomorphisms for the problem of image registration. The energy functional quantifies the effect of transforming a template to a target, while also penalizing non-isometric deformations. The main result is well-posedness of the flow. We also give a geometric description of the gradient in terms of the momentum map.




\medskip

\noindent \textbf{Keywords:} Shape analysis, image registration, diffeomorphisms, gradient flow, partial differential equations, non-linear analysis, Sobolev spaces

\medskip

\noindent\textbf{AMS 2020:} 58D05 (primary); 35F25 68U10 (secondary)

\end{abstract}

\section{Introduction}


Shape analysis comprises mathematical models devised to understand the nature of shapes and transformations of shapes. 
Over the last thirty years, it has undergone rapid development, with applications in biology, physics, computer graphics, design, computer vision and medical imaging.
In particular, based on ideas going back to the evolutionary biologist D'Arcy Wentworth Thompson~\cite{Th1917}, Grenander and others  initiated \emph{computational anatomy} (cf.~\cite{Gr1993,grenander2007,younes2010}).
Here one uses shape analysis to characterize, find, or understand disease via abnormal anatomical deformations of organs such as the brain or the lungs. 

In shape analysis, new shapes are obtained by deforming a template.
Often the deformation is achieved by the action of a group of diffeomorphisms.
As a central example, consider a template function $I_0$ on a Riemannian manifold $(M,g)$. 
This template may describe gray-scale values for an image generated by magnetic resonance imaging (MRI) or computed tomography (CT). 
If $\varphi$ is a diffeomorphism, the deformation of the template is $I_0\circ\varphi^{-1}$. This new function may model an image taken at a different time or from a different viewing angle or both. 

Let $\cD(M)$ denote the group of all diffeomorphisms of $M$.
The \emph{template matching problem} is to find a diffemorphism $\varphi\in\cD(M)$ that deforms a template $I_0\circ\varphi^{-1}$ to match a given target $I_1$. 
In the generic situation, this problem is ill-posed for two reasons.
First, the action of $\cD(M)$ on the space of functions is not transitive: from a given template only a subset of targets can be reached, namely those belonging to the group orbit \edit{of the template function}.
Second, even for the relaxed problem, where one tries to find the best fit with respect to a similarity measure (typically a norm), the problem is ill-posed since there are no restrictions on how ``wild'' the diffeomorphism can be.
The resolution is to solve a regularized minimization problem for a functional $E\colon\cD(M)\to \mathbb{R}$ of the form
%
\begin{equation}
\label{eq:sim-and-dist}
    E(\varphi)= \frac{1}{2}\lVert I_0\circ\varphi^{-1}-I_1\rVert_{L^2}^2 + \sigma R(\varphi), \qquad \sigma \geq 0.
\end{equation}
Here, $\lVert\cdot \rVert_{L^2}$ is the $L^2$ norm on the space of functions assuring that gross features are matched, and $R\colon \cD(M)\to \mathbb{R}^+$ is a regularization functional assuring that $\varphi$ remains well-behaved.
The larger the parameter $\sigma$, the more regularization is imposed on the problem.
The standard regularization choice is to equip $\cD(M)$ with a right-invariant Riemannian metric and then take $R(\varphi)=\dist(\varphi,\id)$, where $\id$ is the identity transformation and $\dist$ is the induced Riemannian distance on $\cD(M)$. 
This setting is called \emph{large deformation diffeomorphic metric matching} (LDDMM)  \cite{Tr1995,DuGrMi1998, Joshi2000landmark,Be2003,beg2005computing,MaMcMoPe2013}.
It closely resembles Arnold's~\cite{arnold1966geometrie} geometric description of an incompressible ideal fluid as a geodesic equation on the group of volume-preserving diffeomorphisms.
Results on existence of minimizers are given by Trouvé~\cite{Tr1995}.
It is also possible to formulate multiscale versions of LDDMM, which enables convergence results for $\sigma\to 0$ \cite{MoNaRo2019}.

To minimize $E(\varphi)$ one has to take into account that $\varphi$ should be a diffeomorphism.
The best way is to generate $\varphi$ by integration of a time-dependent vector field $v_t = v(t,\cdot)$ on $M$.
That is, we take $\varphi$ to be the end-point $\gamma(1)$ for the path $\gamma(t)$ in $\cD(M)$ defined by
\begin{equation}\label{eq:reconstruction_vf}
    \dot\gamma = v_t\circ\gamma, \qquad t\in [0,1]. 
\end{equation}
Due to the right-invariance of the Riemannian metric on $\cD(M)$, the LDDMM problem is naturally formulated as a minimization problem over $v$
\begin{equation}\label{eq:lddmm_min}
    \min_{v} \left( \lVert I_0\circ\gamma(1)^{-1}-I_1\rVert_{L^2}^2  + \sigma\int_{0}^1\langle v_t,v_t\rangle dt\right)
\end{equation}
subject to equation \eqref{eq:reconstruction_vf}.
Here, $\langle\cdot,\cdot\rangle$ is the inner product on the space of vector fields \edit{$\mathfrak{X}(M)=T_\id \cD(M)$} defining the right-invariant Riemannian structure on $\cD(M)$.
Since $\gamma(t)$ is generated by $v$, the problem can be written entirely in terms of the time-dependent function $I(t,\cdot) = I_0\circ\gamma(t)^{-1}$ and $v$ as
\begin{equation}\label{eq:lddmm_min_reduced}
    \min_{v,I} \left( \lVert I(1)-I_1\rVert_{L^2}^2  + \sigma\int_{0}^1\langle v_t,v_t\rangle dt\right)
\end{equation}
subject to the constraint
\[
    \dot I + v_t\cdot \nabla I = 0 , \qquad I(0) = I_0 .
\]
For details on this setting, including the natural generalization to arbitrary Lie groups, see \cite{Bruveris2011,BrHo2013} and references therein.
Geometrically, the minimization problem~\eqref{eq:lddmm_min_reduced} is closely related to the dynamical formulation of optimal transport~\cite{BeBr2000}.
Although the problem~\eqref{eq:lddmm_min_reduced} is geometrically intuitive, it is numerically better to work with \eqref{eq:lddmm_min}.
That is, it is better to keep track of the mesh deformation, as a discretization of $\gamma(t)$, rather than iteratively applying many small deformations, as a discretization of $I(t)$.


A mathematically beautiful feature of LDDMM is its connection to hydro\-dynamic-type partial differential equations (PDEs).
Indeed, take the inner product on vector fields to be Sobolev $H^k$
\begin{equation}\label{eq:inertia_operator}
    \langle v_t,v_t\rangle = \int_M v_t\cdot \underbrace{(1-\alpha\Delta)^k}_A v_t \, d\mu, \qquad \alpha>0, \; k\in\mathbb{N},
\end{equation}
where $A = (1-\alpha\Delta)^k$  is the \emph{inertia operator}\footnote{The results presented in this paper hold when $A$ is a general elliptic differential operator of order $k\ge2$. We choose $A = (1-\alpha\Delta)^k$ for simplicity of computation.} (the name is borrowed from the language of geometric mechanics, cf.~\cite{arnold1989mathematical}). 
A requirement for $v$ to be optimal is that it fulfills the \emph{EPDiff equation}~\cite{Mu1998,MiTrYo2002,HoMa2005}
%
\begin{equation}\label{eq:epdiff}
    \dot m+Dm\cdot v+Dv^\top m+\Div(v)m=0,\quad m=Av .
\end{equation}
Geometrically these equations arise from the geodesic equation on $\cD(M)$ with respect to the right-invariant Riemannian metric on $\cD(M)$ induced by the inner product~\eqref{eq:inertia_operator}.
In one spatial dimension, the equations \eqref{eq:epdiff} become the Camassa--Holm model for shallow water motion~\cite{CaHo1993}.
%

For all its beauty, the LDDMM framework does have a drawback: it is computationally expensive because the Riemannian distance on $\cD(M)$ lacks a closed-form expression.
Therefore, to numerically solve the minimization problem \eqref{eq:lddmm_min} one has to either iterate over a discretization of entire paths of vector fields or apply initial value shooting over a discretization of the EPDiff equation.
Both approaches have the same computational complexity per iteration: $\mathcal O(nm)$, where $n$ and $m$ are the numbers of spatial and temporal discretization points.

There is a way to avoid the high computational complexity of LDDMM.
Indeed, if the variational derivative of the energy function \eqref{eq:sim-and-dist} on $\cD(M)$ is given by explicit formulae one can consider the Riemannian gradient flow
%
\begin{equation}
\label{eq:grad-flow-intro}
    \dot{\varphi} =  -\nabla E(\varphi)
\end{equation}
for some initial condition $\varphi(0)=\varphi_0\in \cD(M)$.
Here, $\nabla E$ denotes the gradient on $\cD(M)$ with respect to the right-invariant metric defined by the inner product~\eqref{eq:inertia_operator}.
Based on this, a numerical method is obtained via gradient descent (cf.~\cite[\textsection 11.3]{younes2010}).
The simplest case, where $E$ does not contain a regularization term, i.e., $\sigma=0$ in~\eqref{eq:sim-and-dist}, is called \textit{greedy matching} \cite{christensen1996deformable,Trouve1998,beg2005computing}. 
Typically greedy matching algorithms run indefinitely, yielding increasingly complicated diffeomorphisms that in vain try to achieve $I_0\circ\varphi^{-1}=I_1$.
Greedy matching is therefore numerically ill-posed.

In this paper, we study the registration problem using a gradient method regularized on the space $\Met(M)$ of Riemannian metrics on $M$.
Precisely, we penalize the distance between the original metric $g$ and its push-forward $\varphi_*g$ using the $L^2$-distance in the space of symmetric 2-forms,
\begin{equation}\label{eq:regularizer}
    R(\varphi)= \frac{1}{2}\lVert\varphi_*g-g\rVert_{L^2}^2.
\end{equation}
By computing the variational derivative of $R(\varphi)$ we can avoid the costly algorithms associated with regularization via the Riemannian distance as in~\eqref{eq:lddmm_min}.
Still, we can retain the geometric properties, in particular that diffeomorphisms are generated by vector fields.
The explicit form of the gradient flow~\eqref{eq:grad-flow-intro} for the functional \eqref{eq:sim-and-dist} with regularization~\eqref{eq:regularizer} is \edit{a flow $t\mapsto \varphi(t) \in \cD(M)$ given in terms of $\varphi$ and the generating vector field $v$ as} the non-linear partial differential equation
\begin{equation}\label{eq:gradient-flow-explicit}
    \begin{aligned}
        &\dot\varphi = v\circ\varphi, \\
        &(1-\alpha\Delta)^k v = (I-I_1)\nabla I - \sigma \operatorname{div}_{h}(h-g) ,
    \end{aligned}
\end{equation}
where $k\ge 1$, $I \coloneqq I_0\circ\varphi^{-1}$, $h \coloneqq \varphi_*g$, \edit{and $(h,p)\to \operatorname{div}_h(p)$ is a bi-linear divergence-type differential operator (see Proposition~\ref{prop:gradient_flow} for details)}.

From a geometric point of view, the regularization choice~\eqref{eq:regularizer} is closely related to the LDDMM setting.
Indeed, the $L^2$-metric on the space of Riemannian metrics induces, via pull-back, an $H^1$-metric on the space of diffeomorphisms (cf.~\cite{modin2015generalized,BaJoMo2015}); see Figure~\ref{fig:3mfld} for an illustration of the geometry.
Since the action of $\cD(M)$ on the space of Riemannian metrics is non-transitive, this means that the regularization \eqref{eq:regularizer} is an outer distance corresponding to an $H^1$ Riemannian distance on $\cD(M)$.
Recently, the idea to act on Riemannian metrics is also explored for applications in computational anatomy of the brain~\cite{CaDaZhBaFlJo2021}.

\begin{figure}
	\centering
    \input{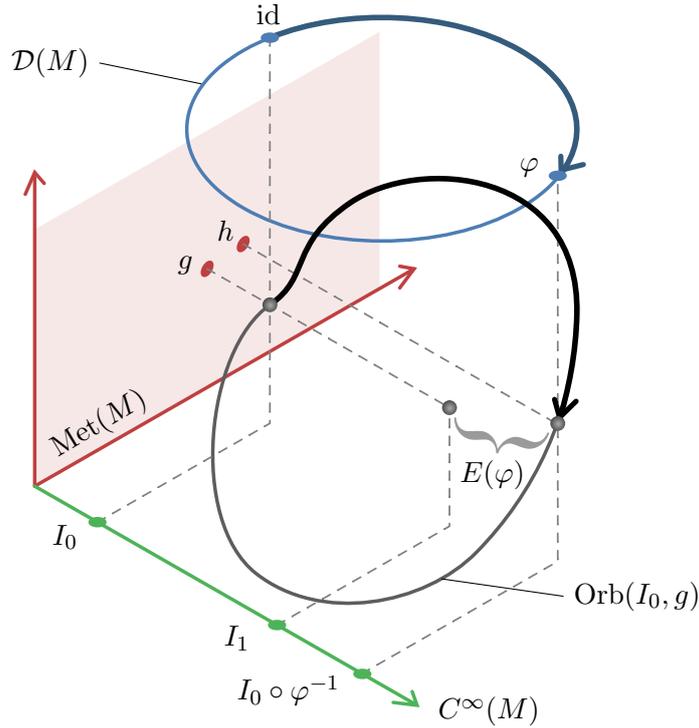}
	\caption{\label{fig:3mfld} \edit{Illustration of the geometric setup. The group $\cD(M)$ acts on $(I_0,g) \in C^\infty(M)\times\mathrm{Met}(M)$ which induces the orbit $\mathrm{Orb}(I_0,g)$. The functional $E(\varphi)$ measures the distance from the point $(I_0\circ\varphi^{-1},\varphi_*g)$ on the orbit to the target $(I_1,g)$ typically not on the orbit. The flow evolves on the group $\cD(M)$ or, equivalently, on the orbit $\mathrm{Orb}(I_0,g)$, such that it strives to minimize the functional $E(\varphi)$.}}
\end{figure}

Our main result is the following existence theorem for the gradient flow~\eqref{eq:gradient-flow-explicit}.
\begin{teo}\label{teo:local-existence}
Let $(M,g)$ be a $C^\infty$-smooth, oriented, compact Riemannian manifold without boundary. 
Furthermore, let $I_0,I_1 \in C^\infty(M)$.
For $s>2+\frac{1}{2}\dim M$, let $\cD^s(M)$ denote the Hilbert manifold obtained by completion of $\cD(M)$ in the Sobolev $H^s$ topology of maps (cf.~\cite{Pa1968}).
Then the Cauchy problem~\eqref{eq:gradient-flow-explicit} is locally well posed on $\cD^s(M)$:
\begin{enumerate}
    \item For each initial datum $\varphi_0\in\cD^s(M)$ there exists a maximal $T>0$ and unique curves $\varphi\colon[0,T)\to\cD^s(M)$ and $v\colon[0,T)\to T_\id\cD^s(M)$ with $\varphi(0) = \varphi_0$ that fulfill equations~\eqref{eq:gradient-flow-explicit}.

    \item For $t\in [0,T)$ the solution $\varphi(t)$ depends smoothly on $\varphi_0$ (in the Hilbert manifold topology of $\cD^s(M)$).
\end{enumerate}
Furthermore, if $k\geq s$ then $T=\infty$ so the flow is globally well posed.
\end{teo}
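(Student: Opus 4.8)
The plan is to recast the Cauchy problem as an autonomous ODE on the Hilbert manifold $\cD^s(M)$, namely
\[
  \dot\varphi = X(\varphi), \qquad
  X(\varphi) := \bigl(A^{-1}F(\varphi)\bigr)\circ\varphi, \qquad
  F(\varphi) := (I-I_1)\nabla I - \sigma\operatorname{div}_h(h-g),
\]
where $A=(1-\alpha\Delta)^k$ is the inertia operator, $I = I_0\circ\varphi^{-1}$, and $h = \varphi_*g$, and then to invoke the standard existence–uniqueness theory for ODEs on Hilbert manifolds together with its companion statement on smooth dependence on initial conditions. For $s>2+\tfrac12\dim M$ the space $H^{s-2}$ is a multiplicative algebra, so $F(\varphi)\in H^{s-2}$ and $v := A^{-1}F(\varphi)\in H^{s-2+2k}\subset H^{s}$; hence $X(\varphi)\in T_\varphi\cD^s(M)$ and $v = \dot\varphi\circ\varphi^{-1}$ is the velocity field of the theorem. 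Granting the key lemma below, statement~(1) is the existence of a unique maximal integral curve, and statement~(2) is the smoothness of the flow map $\varphi_0\mapsto\varphi(t)$.

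Key lemma: $X$ is a smooth (in particular locally Lipschitz) vector field on $\cD^s(M)$. This is the analytic core, and the step I expect to be the main obstacle, because $X$ \emph{appears} to lose derivatives: inversion $\varphi\mapsto\varphi^{-1}$ is only continuous on $\cD^s(M)$, and post-composition with $\varphi$ is likewise not smooth in the Ebin–Marsden calculus. The way around this is to write
\[
  X(\varphi) = \bigl(R_\varphi\,A\,R_{\varphi^{-1}}\bigr)^{-1}\bigl(F(\varphi)\circ\varphi\bigr)
\]
and to exploit two facts. First, $F(\varphi)\circ\varphi$ depends on $\varphi$ \emph{smoothly} as a map $\cD^s(M)\to H^{s-2}$: it involves $\varphi^{-1}$ only through $\nabla I$ and the Christoffel symbols of $h$, that is, through at most two derivatives, and rewriting $(\nabla I)\circ\varphi$, $h\circ\varphi$, $(\partial h)\circ\varphi$ in terms of $D\varphi$, $(D\varphi)^{-1}$, $D^2\varphi$ and of compositions of the fixed smooth tensors $g,g^{-1},\nabla I_0$ with $\varphi$ exhibits $\varphi\mapsto F(\varphi)\circ\varphi$ as a composition of smooth maps between Sobolev spaces. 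Second, the inverse of the conjugated inertia operator $R_\varphi A R_{\varphi^{-1}}$ depends smoothly on $\varphi\in\cD^s(M)$ — an argument in the spirit of Ebin–Marsden's analysis of the incompressible Euler equations, where the rough coefficients produced by the change of variables cancel in the solution operator, and for which the surplus smoothing of order $2k-2\ge 2$ (which holds precisely because $k\ge 2$) is essential. Combining the two facts yields the lemma.

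For the final assertion, suppose $k\ge s$. Since the flow is the Riemannian gradient flow of $E$ for the right-invariant $H^k$ metric \eqref{eq:inertia_operator}, the energy decays along solutions: by the definition of the gradient and right-invariance,
\[
  \frac{\rd}{\rd t}E(\varphi(t)) = -\lVert \nabla E(\varphi)\rVert_\varphi^2 = -\lVert v(t)\rVert_{H^k}^2 ,
\]
so $\int_0^{T}\lVert v(t)\rVert_{H^k}^2\,\rd t \le E(\varphi_0)$ for all $T<T^{*}$, where $T^{*}$ is the maximal existence time. As $M$ is compact and $k\ge s$, we have $\lVert v\rVert_{H^s}\le C\lVert v\rVert_{H^k}$, whence $\int_0^{T}\lVert v(t)\rVert_{H^s}\,\rd t \le C\sqrt{T\,E(\varphi_0)}<\infty$ for every finite $T$ by Cauchy–Schwarz in time. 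I would then close the argument with the classical a priori estimate for the flow equation $\dot\varphi = v\circ\varphi$ on $\cD^s(M)$: a Gr\"onwall inequality built from the Sobolev composition estimate bounds $\lVert\varphi(t)\rVert_{H^s}$ and $\lVert\varphi(t)^{-1}\rVert_{H^s}$ on any bounded time interval in terms of $\int_0^t\lVert v\rVert_{H^s}$ and $\int_0^t\lVert v\rVert_{W^{1,\infty}}$, the latter controlled by the former since $H^s\hookrightarrow W^{1,\infty}$ for $s>1+\tfrac12\dim M$. In particular $\int_0^{T^{*}}\lVert\dot\varphi\rVert_{H^s}\,\rd t<\infty$, so $\varphi(t)$ converges in $\cD^s(M)$ to a diffeomorphism as $t\to T^{*}$; restarting the ODE from that point would extend the solution beyond $T^{*}$, which is impossible unless $T^{*}=\infty$.
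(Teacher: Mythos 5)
Your proposal is correct and follows essentially the same route as the paper: your factorization $X(\varphi)=\bigl(R_\varphi A R_{\varphi^{-1}}\bigr)^{-1}\bigl(F(\varphi)\circ\varphi\bigr)$ is exactly the paper's decomposition $\nabla E = \tilde A^{-1}\circ\tilde F$, with the smoothness of $F(\varphi)\circ\varphi$ obtained (as in Lemma~\ref{lem:F-tilde}) by rewriting the derivatives of $\varphi^{-1}$ through $(D\varphi)^{-1}$ and the Omega Lemma, the smoothness of the conjugated inertia operator obtained (as in Lemma~\ref{lem:bundle-iso}) \`a la Ebin--Marsden, and local well-posedness then following from Picard--Lindel\"of on Hilbert manifolds. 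Your global argument also rests on the same energy identity $\tfrac{\rd}{\rd t}E=-\lVert v\rVert_A^2$ and the hypothesis $k\ge s$ as Lemma~\ref{lem:estimate} and Theorem~\ref{teo:global-existence-finite}; you merely phrase the no-blowup conclusion via $\int_0^{T^*}\lVert\dot\varphi\rVert_{H^s}\,\rd t<\infty$ and a Gr\"onwall estimate rather than via H\"older continuity with respect to the induced Riemannian distance, which is a cosmetic (and if anything more explicit) variant of the paper's closing step.
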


The key to the proof of Theorem~\ref{teo:local-existence} is to show that the system~\eqref{eq:gradient-flow-explicit} is an ordinary differential equation defined by a smooth vector field on the infinite-dimensional Hilbert manifold $\cD^s(M)$.
Once this is established, local existence follows from the Picard-Lindelöf theorem on Banach manifolds (cf.~\cite{lang1999}).
Our approach can be viewed as a gradient flow analog of the approach by Ebin and Marsden~\cite{ebin1970groups} for geodesic flows on groups of \edit{diffeomorphisms}, in turn based on Arnold's~\cite{arnold1966geometrie} geometric description of hydrodynamics (see~\cite{KhMiMo2019,BaMo2020,KhMiMo2020} for modern developments).

To prove smoothness of the vector field on $\cD^s(M)$ we need to understand the geometry underlying the equations~\eqref{eq:gradient-flow-explicit}.
The following result gives the geometric structure in terms of an infinite-dimensional momentum map (cf.~\autoref{sec:gradflows}).



\begin{prop}\label{teo:gradient-functional}
With $\cD^s(M)$ as in Theorem~\ref{teo:local-existence} \edit{and with $\Met^{s-1}(M)$ the Sobolev $H^{s-1}$ completion of $\Met(M)$}, let $Q=H^{s}(M,\mathbb{R})\times\Met^{s-1}(M)$.
Then the functional $E(\varphi)$ for the gradient flow~\eqref{eq:gradient-flow-explicit} is of the form
\[
    E(\varphi) = f(I_0\circ\varphi^{-1},\varphi_*g)
\]
with $f\colon Q\to\mathbb{R}$ given by
\[
    f(I, h) = \frac{1}{2}\lVert I-I_1 \rVert_{L^2}^2 + \frac{\sigma}{2}\lVert h-g \rVert_{L^2}^2.
\]
Furthermore, let $\mathfrak{X}^{s-2}(M)$ denote vector fields of Sobolev type $H^{s-2}$ \edit{and $S_{2,0}^{s-1}(M)$ the space of symmetric $(2,0)$-tensor fields of Sobolev type $H^{s-1}$}.
Then the gradient of $E$ with respect to the right-invariant Riemannian metric on $\cD^s(M)$ defined by~\eqref{eq:inertia_operator} is
\[
    \nabla E(\varphi) = v\circ\varphi, \qquad v = -(1-\alpha\Delta)^{-k}J\left( \varphi_*I_0,\varphi_*g, \frac{\delta f}{\delta I}, \frac{\delta f}{\delta h} \right)
\]
where \edit{$J\colon T^*Q \to \mathfrak{X}(M)^*$, identified as a mapping $Q\times H^s(M,\mathbb{R})\times S_{2,0}^{s-1}(M) \to \mathfrak{X}^{s-2}(M)$,} is given by
\[
    J(I, h, P, p) = -P\nabla I + 2 \operatorname{div}_h(p)
\]
and is the momentum map for the action of $\cD^s(M)$ on the co-tangent bundle $T^*Q$.
\end{prop}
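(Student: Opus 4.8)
The plan is as follows. The first assertion, $E(\varphi) = f(I_0\circ\varphi^{-1},\varphi_*g)$, is immediate: it is just \eqref{eq:sim-and-dist} with the regularizer \eqref{eq:regularizer} substituted, rewritten so that the template-dependent part factors through the pair $(I_0\circ\varphi^{-1},\varphi_*g)\in Q$. The content is in the gradient formula, and here I would exploit that the map
\[
 \Psi\colon \cD^s(M)\to Q,\qquad \Psi(\varphi)=(\varphi_*I_0,\varphi_*g)=(I_0\circ\varphi^{-1},\varphi_*g),
\]
is the orbit map through $(I_0,g)$ for the (left) action of $\cD^s(M)$ on $Q$ by pushforward of functions and of metrics, so that $E=f\circ\Psi$ and the gradient is obtained by the chain rule together with the Riemannian duality on $\cD^s(M)$.

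Concretely, first I would record the infinitesimal generator of the pushforward action: for $\xi\in\mathfrak{X}^s(M)=T_\id\cD^s(M)$,
\[
 \xi_Q(I,h)=\frac{d}{dt}\Big|_{0}\big((\psi^\xi_t)_*I,(\psi^\xi_t)_*h\big)=-(\mathcal{L}_\xi I,\mathcal{L}_\xi h),
\]
where $\psi^\xi_t$ is the flow of $\xi$; the minus sign is the usual feature of pushforward actions. Writing an arbitrary tangent vector at $\varphi$ as $\xi\circ\varphi$ and differentiating $\Psi$ along $t\mapsto\psi^\xi_t\circ\varphi$ gives $d\Psi_\varphi(\xi\circ\varphi)=\xi_Q(\Psi(\varphi))$. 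Next I would compute the $L^2$-gradient of $f$, which is elementary: $\delta f/\delta I=I-I_1$ and $\delta f/\delta h=\sigma(h-g)$, so that $\big(\Psi(\varphi),\delta f(\Psi(\varphi))\big)\in T^*Q$ is exactly the quadruple $(\varphi_*I_0,\varphi_*g,\delta f/\delta I,\delta f/\delta h)$ appearing in the statement. The chain rule then yields
\[
 dE_\varphi(\xi\circ\varphi)=df_{\Psi(\varphi)}\big(\xi_Q(\Psi(\varphi))\big)=\langle \delta f(\Psi(\varphi)),\,\xi_Q(\Psi(\varphi))\rangle_{T^*Q,\,TQ}.
\]

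The third step identifies the right-hand side with a pairing against $\xi$. By the standard formula for the momentum map of a cotangent-lifted action, $\langle\alpha_q,\xi_Q(q)\rangle=\langle\mathbf{J}(\alpha_q),\xi\rangle$, where $\mathbf{J}(q,p)=J(q,p)$ is obtained by contracting $p$ against the generator; unwinding this using $\xi_Q=-(\mathcal{L}_\xi\cdot,\mathcal{L}_\xi\cdot)$ and two integrations by parts gives exactly $J(I,h,P,p)=-P\nabla I+\operatorname{div}_h(p)$: the function term uses only $\mathcal{L}_\xi I=dI(\xi)=\langle\nabla I,\xi\rangle_g$, while the metric term uses $\mathcal{L}_\xi h=2\operatorname{Sym}(\nabla^h\xi^\flat)$ together with integration by parts against the volume form $\operatorname{vol}_h$ compatible with the Levi--Civita connection of $h$ --- this last point is precisely why the divergence that appears is $\operatorname{div}_h$ rather than $\operatorname{div}_g$. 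That cotangent lifts are Hamiltonian with equivariant momentum map is a general fact, so no separate verification of equivariance is needed. Finally, right-invariance of the metric \eqref{eq:inertia_operator} gives, for $\nabla E(\varphi)=v\circ\varphi$,
\[
 dE_\varphi(\xi\circ\varphi)=\langle\nabla E(\varphi),\xi\circ\varphi\rangle_\varphi=\langle v,\xi\rangle_{H^k}=\langle(1-\alpha\Delta)^k v,\xi\rangle_{L^2};
\]
comparing with $dE_\varphi(\xi\circ\varphi)=\langle J(\cdots),\xi\rangle_{L^2}$ for all $\xi$ and inverting the elliptic operator $(1-\alpha\Delta)^k$ produces the claimed formula for $v$ (up to the sign convention one fixes for $\mathbf{J}$).

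I expect the main obstacle to be the bookkeeping in the $\Met(M)$-factor: making precise the $L^2$-structure on $\Met^{s-1}(M)$ so that the integration by parts genuinely produces $\operatorname{div}_h$, and checking that the Sobolev orders close up --- $\nabla I\in H^{s-1}$, while the Christoffel symbols of $h=\varphi_*g\in\Met^{s-1}(M)$ lie in $H^{s-2}$, so $\operatorname{div}_h(p)\in H^{s-2}$, hence $J$ indeed maps into $\mathfrak{X}^{s-2}(M)$ and $v=(1-\alpha\Delta)^{-k}J(\cdots)$ gains enough derivatives to make $v\circ\varphi$ a well-defined tangent vector at $\varphi$. One also needs $\Psi$ to be a smooth map of Hilbert manifolds, which is the familiar loss-of-derivative issue behind the hypothesis $s>2+\tfrac12\dim M$; but that analysis belongs to the proof of Theorem~\ref{teo:local-existence} rather than to this proposition.
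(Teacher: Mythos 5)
Your proposal follows essentially the same route as the paper: there the statement is established by combining the abstract chain-rule/momentum-map computation of Proposition~\ref{teo:gradient-intro} with the explicit identification of the two components $J_1$ and $J_2$ in Lemmas~\ref{prop:Hs-momentum-map} and~\ref{prop:Met-momentum-map}, and then using right-invariance to transport the covector to the identity and invert the inertia operator --- exactly your three steps, including the decomposition of $Q$ into the function factor and the metric factor. Two details in the metric factor deserve care, however. First, the pairing must be used consistently: the variational derivative $\delta f/\delta h=\sigma(h-g)$ is read off from the $L^2$ structure of \textsection\ref{space-Rmetrics-section}, which uses the \emph{fixed} volume form $\mu_g$ and $g$-raised indices. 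The paper therefore integrates by parts with the $g$-connection and $\mu_g$, obtaining the three-term expression~\eqref{J-in-coords}, and only afterwards observes that this collapses to $2\operatorname{div}_h(p)$ when the connection, volume form and musical isomorphisms are instead taken with respect to $h$. Your plan to integrate by parts against $\operatorname{vol}_h$ and the $h$-connection from the outset computes the momentum map for a different, $h$-weighted duality than the one in which $p=\sigma(h-g)$ was identified; done literally this changes the answer by lower-order terms, so you should either work with $\mu_g$ throughout (as the paper does) or first re-express $p$ in the $h$-pairing. Second, the symmetrization $\mathcal{L}_\xi h=2\operatorname{Sym}(\nabla\xi^\flat)$ produces a factor $2$, so the honest output is $J_2(h,p)=2\operatorname{div}_h(p)$, as in Lemma~\ref{prop:Met-momentum-map} and display~\eqref{gradient_explicit}; the factor is absent from the statement of the proposition and from your final formula, an inconsistency you inherit from the paper rather than introduce. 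The remaining points you defer --- the Sobolev bookkeeping placing $J$ in $\X^{s-2}(M)$ and the smoothness questions --- are indeed handled in the paper only in the proof of Theorem~\ref{teo:local-existence}.
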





Our paper is organized as follows. 
In \autoref{sec:gradflows} we consider gradient flows on Lie groups in an abstract setting, for a generic right-invariant Riemannian metric and functionals obtained via lifting of a group action. 
The group action gives rise to the momentum map in an abstract version of Proposition~\ref{teo:gradient-functional}.
Thereafter we give global existence results in the abstract setting.
In \autoref{section:3} we change perspective to the specific infinite-dimensional case above, where the group consists of Sobolev diffeomorphisms of a compact Riemannian manifold.
All results from the abstract setting are valid in the Hilbert category of Riemannian manifolds and Lie groups.
However, the Sobolev diffeomorphisms setting does not quite fit in the abstract setting, for two reasons.
First, although the space of Sobolev diffeomorphisms is a Hilbert manifold, it is not a Lie group with respect to its manifold structure (it is only a topological group).
Second, the right-invariant Riemannian metric we use is not necessarily strong with respect to the manifold structure.
Despite these obstructions, enough structure survives to recover, under adequate conditions, the results in the abstract setting.
In \autoref{section:gradflow-diffeos} we prove this in a series of lemmata, building up to the proof of Theorem~\ref{teo:local-existence} in \autoref{exist-and-smoothness-section}.




\medskip

\textbf{Acknowledgments.} 
This work was supported by the Swedish Research Council (K.M.\ grant number 2017-05040; C-J.K.\ grant number 2018-03873) and the Knut and Alice Wallenberg Foundation (K.M.\ grant number WAF2019.0201)


\section{Gradient flows on Lie groups \label{sec:gradflows}}

Here we study gradient flows on Lie groups for functionals lifted via a group action.
The gradient in this setting carries a close relationship to the momentum map in classical mechanics. 
Momentum maps generalize the notions of linear and angular momenta (cf.~\cite{arnold1989mathematical,marsden1999mechanics}).

To begin, let $\cG$ be a Lie group that acts on a manifold $Q$ from the left,
\begin{align*}
    \ell \colon \cG\times Q \to Q, \qquad (g,q) \mapsto \ell(g,q)
\end{align*}
We shall use the notation $g.q \coloneqq \ell(g,q)$. 
Recall the orbit $\cG.q=\{\ell(g,q)\in Q\mid g\in\cG\}$ of a point $q\in Q$. 
In shape analysis, the orbit $G.q$ represents all possible deformations of the template $q$.

Keeping $q\in Q$ fixed, the differential of the action map $\ell(g,q)$ at the identity $e$ defines the \textit{infinitesimal} action $\mathfrak g\times Q\to TQ,\ (\xi,q)\mapsto \xi.q$, where $\mathfrak g$ is the Lie algebra of $\cG$. 
We may fix $\xi\in\mathfrak{g}$ to obtain a vector field $\xi_\ell\colon Q\to TQ$ that generates the group action.




\begin{defin}[Momentum maps of cotangent lifted actions]
Given a left action of $\cG$ on $Q$, the \emph{cotangent lifted momentum map} $J\colon T^*Q\to\mathfrak{g}^*$ is defined by
\begin{equation} 
\label{def:cotangent-momentum-map-intro}
\langle J(q,p),\xi\rangle = \langle p,\xi_\ell(q)\rangle, \qquad \forall\,\xi\in\mathfrak{g}.
\end{equation}
\end{defin}

\begin{exam}
    Let $\mathrm{SO}(3)$ act on $\R^3$ by matrix multiplication. The Lie algebra $\mathfrak{so}(3)$ is identified with $\R^3$ via the ``hat isomorphism''
    \[
    \begin{pmatrix}x\\y\\z
    \end{pmatrix}
    =\omega\mapsto \hat{\omega}=
    \begin{pmatrix}
    0 & -z &y\\
    z & 0 & -x\\
    -y & x & 0
    \end{pmatrix}.
    \]
    The infinitesimal generator of the action is given by $\hat{\omega} q = \omega\times q$. 
    It follows that $J\colon T^*\R^3\to \mathfrak{so}(3)^*\simeq \R^3$ is defined by $\langle J(q,p),\omega\rangle=p\cdot \hat{\omega}q =\omega\cdot(q\times p)$ for all $\omega\in\mathbb{R}^3$.
    Thus, $J(q,p)=q\times p$, the angular momentum vector in classical mechanics. 
\end{exam}

The group $\cG$ can be equipped with a Riemannian metric. 
If this metric is right invariant, then each orbit of the left action (assuming it is a manifold) also inherits a Riemannian structure. 
To see this, let $(\,,)$ be a Riemannian metric on $\cG$ and assume that it is right invariant, that is,
\begin{equation}
\label{eq:leftinv-metric}
(\xi,\eta)_e = (\xi.g,\eta.g)_g\qquad \text{for all } g\in \cG,\ \eta,\xi\in\mathfrak{g},
\end{equation}
where $TR_g:\xi\mapsto \xi.g$ is the tangent map of the right action $R_g:\cG\to\cG, h\mapsto hg.$ 
Such a metric is defined on the entire group once it is defined at the identity $e\in\cG.$
We write the inner product at the identity via an operator $A:\mathfrak{g}\to\mathfrak{g}^*$
\begin{equation}
\label{eq:A-product-definition}
	(\xi,\eta)_e = \langle A\xi,\eta\rangle.
\end{equation}
Throughout, $A$ is called the \emph{inertia operator}, following the nomenclature of classical mechanics \cite{arnold1989mathematical}.
We use the notation $\lVert \xi \rVert_A \coloneqq \sqrt{\langle A\xi,\eta\rangle}$ for the corresponding norm.

Any Riemannian metric provides an isomorphism between vectors and covectors via the \emph{flat isomorphism}. 
The gradient of a smooth function $E$ is defined as the vector whose corresponding covector is the differential $dE$. 
Given the Riemannian structure~(\ref{eq:leftinv-metric}), the gradient $\nabla E$ on $\cG$ at $g\in\cG$ is given by
\begin{equation}
\label{eq:gradient-definition}
    (\nabla E(g),\eta.g)_g=\langle dE_g,\eta.g\rangle 
\end{equation}
for all $\eta\in\mathfrak{g}.$ The following theorem provides the geometric meaning of the gradient on the orbit of a left action $\cG\times Q\to Q$.

\begin{prop}\label{teo:gradient-intro}
Assume that the Lie group $\cG$ is endowed with a right invariant metric.
Let $J:T^*Q\to\mathfrak{g}^*$ be the momentum map associated to the cotangent lifted  left action $\cG\times Q\ni(g,q)\mapsto g.q\in Q$.
Fix $q\in Q$ and let $E(g)=f(g.q)$ for some smooth function $f$ on $Q$. 
Then the gradient of $E$ is given by
\begin{equation}\label{eq:gradient-expression}
	\nabla E(g) = TR_{g}\,\eta=\eta.g,\qquad\text{where}\quad A\eta=J\big(g.q,df(g.q)\big).
\end{equation}
\end{prop}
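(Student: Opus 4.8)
The plan is to unwind the definition of the gradient on $\cG$ from~\eqref{eq:gradient-definition} and express it through the inertia operator $A$. Since $E(g) = f(g.q)$, the chain rule gives $\langle dE_g, \eta.g\rangle = \langle df(g.q), \frac{d}{dt}\big|_{t=0}\,(\exp(t\eta)g).q\rangle$ for $\eta\in\mathfrak g$. The curve $t\mapsto (\exp(t\eta)g).q$ passes through $g.q$ at $t=0$ with velocity equal to the infinitesimal generator $\eta_\ell$ of the action evaluated at the point $g.q$; that is, $\frac{d}{dt}\big|_{t=0}(\exp(t\eta)g).q = \eta_\ell(g.q)$. Hence $\langle dE_g,\eta.g\rangle = \langle df(g.q),\eta_\ell(g.q)\rangle$, which by the defining relation~\eqref{def:cotangent-momentum-map-intro} of the cotangent lifted momentum map equals $\langle J(g.q, df(g.q)),\eta\rangle$.

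Next I would rewrite the left-hand side of~\eqref{eq:gradient-definition} using right-invariance. Write $\nabla E(g) = \zeta.g$ for the unique $\zeta\in\mathfrak g$ determined by the flat isomorphism (this uses that the metric is strong, or at least that the gradient exists; in the abstract setting this is assumed). Then by right-invariance~\eqref{eq:leftinv-metric} and~\eqref{eq:A-product-definition},
\[
    (\nabla E(g),\eta.g)_g = (\zeta.g,\eta.g)_g = (\zeta,\eta)_e = \langle A\zeta,\eta\rangle.
\]
Combining the two computations, $\langle A\zeta,\eta\rangle = \langle J(g.q, df(g.q)),\eta\rangle$ for all $\eta\in\mathfrak g$, so $A\zeta = J(g.q, df(g.q))$. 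This is exactly the claimed identity $A\eta = J(g.q, df(g.q))$ with $\eta = \zeta$, and $\nabla E(g) = \eta.g = TR_g\,\eta$.

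The one point that requires a little care, and which I expect to be the main (though minor) obstacle, is the identification $\frac{d}{dt}\big|_{t=0}(\exp(t\eta)g).q = \eta_\ell(g.q)$. This is the statement that the fundamental vector field of the left action is computed by differentiating the action of the one-parameter subgroup; because the action is a left action and we are perturbing on the left by $\exp(t\eta)$, the relevant curve in $\cG$ is $t\mapsto \exp(t\eta)g$, whose derivative at $t=0$ is $\eta$ transported to $g$, but composed with the action this collapses to $\eta.(g.q) = \eta_\ell(g.q)$ by the chain rule and the group-action axiom $(\exp(t\eta)g).q = \exp(t\eta).(g.q)$. Once this is spelled out, everything else is a direct substitution, and no analytic subtleties arise at the abstract level since we are assuming $A$ invertible (so that $\nabla E$ is well defined); the genuinely delicate analytic questions are deferred to the Sobolev setting treated later.
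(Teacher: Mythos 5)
Your proposal is correct and follows essentially the same route as the paper's proof: chain rule to reduce $dE$ to $df$ paired with the infinitesimal generator, the defining relation of the cotangent lifted momentum map, and right-invariance together with the inertia operator to identify the gradient. The only difference is presentational — you solve for the Lie-algebra element $\zeta$ and spell out the one-parameter-subgroup computation of $\eta_\ell(g.q)$ more explicitly than the paper does — but the argument is the same.
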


\begin{proof}
Let $\xi\in\mathfrak{g}$. 
By the chain rule,
\begin{equation}
\langle dE(g),\dot{g}\rangle=\langle df(g.q),\xi_\ell(g.q)\rangle.
\end{equation} 
By Definition~\ref{def:cotangent-momentum-map-intro},
\begin{equation}
\langle df(g.q),\xi_\ell(g.q)\rangle =\langle J\big(g.q, df(g.q)\big),\xi\rangle,
\end{equation}
The construction of the metric via $(\xi,\eta)_e \coloneqq \langle A\xi,\eta\rangle$ and right invariance,~(\ref{eq:leftinv-metric}), gives
\begin{equation}
\langle J(g.q,df(g.q)),\xi\rangle = (A^{-1}J(g.q,df(g.q)),\xi)_e= (TR_g A^{-1}J(g.q,df(g.q)),\xi.g)_g
\end{equation}
for $\xi.g=TR_g\xi.$ This completes the proof.
\end{proof}

\begin{exam}
	We continue the previous example.
	Consider the function on $\mathbb{R}^3$ given by $f(x)=\|x-x_1\|^2$, where $x_1$ is a fixed target and the norm is given by the standard Euclidean inner product. 
	Then $E:\mathrm{SO}(3)\to\R$ may be defined via $E(\hat{\omega})=f(\hat{\omega}x_0)$ some fixed template $x_0$.
	Let $A$ be an inertia operator.
	Then $df(x)= 2(x-x_1)$, so
	\begin{equation}
		J\left(\hat{\omega}x_0,df(\hat{\omega}x_0)\right)= 2(\hat{\omega}x_0)\times (\hat{\omega}x_0-x_1).
	\end{equation}
	Therefore, $\nabla E(\hat{\omega})=A^{-1}2(\hat{\omega}x_0)\times (\hat{\omega}x_0-x_1),$ where the inverse of $A$ acts on the vector $2(\hat{\omega}x_0)\times (\hat{\omega}x_0-x_1)$.
\end{exam}

\begin{rem}
    From the point-of-view of numerical methods, an interesting open problem is to develop integration schemes on matrix  Lie groups that preserve the gradient flow structure discussed here.
    To this end, the isospectral integrators of Modin and Viviani~\cite{MoVi2020c} can likely be adopted.
\end{rem}

\subsection{Well-posedness of the flow}
For the setting in Proposition~\ref{teo:gradient-intro}, consider the gradient flow
\begin{equation}
    \dot g = -\xi. g, \qquad A\xi = J(g.q_0, df(g.q_0))
\end{equation}
where $q_0\in Q$ is some fixed template.
Local in time existence follows by standard ODE theory (cf.~\cite{lang1999}), since the vector field defining the dynamics is locally Lipschitz continuous.
In this section, we show that the flow is also globally defined.
To see this we work with the distance induced by the Riemannian metric on $\cG$.
Recall its definition
\[
    d(g_0,g_1) = \inf_{\gamma} \int_0^1 \lVert \dot\gamma(t)\cdot\gamma(t)^{-1} \rVert_{A} dt
\]
where the infimum is taken over all all smooth curves $\gamma\colon[0,1]\to\cG$ such that $\gamma(0) = g_0$ and $\gamma(1) = g_1$.

\begin{lem}\label{lem:estimate}
    Assume that $\gamma\colon [0,\epsilon) \to \cG$ is a $C^1$ solution curve to the gradient flow
    \begin{equation}\label{eq:gradient-flow-finite-dim}
        \dot\gamma = -\nabla E(\gamma), \quad \gamma(0) = g_0\in\cG
    \end{equation}
    for some function $E\colon\cG\to \mathbb{R}$.
    Let $v(t) = \dot\gamma(t)\cdot \gamma(t)^{-1}$.
    Then
    \[
        \lVert v(t) \rVert_A^2 = -\frac{d}{dt} E(\gamma(t)).
    \]
\end{lem}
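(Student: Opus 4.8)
The plan is a direct ``energy identity'' computation, combining the chain rule, the definition of the gradient~\eqref{eq:gradient-definition}, the flow equation~\eqref{eq:gradient-flow-finite-dim}, and right invariance~\eqref{eq:leftinv-metric}. First I would differentiate $t\mapsto E(\gamma(t))$; since $\gamma$ is $C^1$ and $E$ smooth, the chain rule gives
\[
    \frac{d}{dt}E(\gamma(t)) = \langle dE_{\gamma(t)},\dot\gamma(t)\rangle .
\]
Because $TR_{\gamma(t)}\colon\mathfrak{g}\to T_{\gamma(t)}\cG$ is a linear isomorphism, I would write $\dot\gamma(t) = v(t).\gamma(t)$ with $v(t)=\dot\gamma(t)\cdot\gamma(t)^{-1}\in\mathfrak{g}$, which is precisely the vector field in the statement. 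Applying~\eqref{eq:gradient-definition} with the tangent vector $\eta.g=\dot\gamma(t)$ then turns the right-hand side into a metric pairing,
\[
    \frac{d}{dt}E(\gamma(t)) = \big(\nabla E(\gamma(t)),\dot\gamma(t)\big)_{\gamma(t)} .
\]

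Next I would insert the flow equation $\dot\gamma(t)=-\nabla E(\gamma(t))$ into the first slot, obtaining $\frac{d}{dt}E(\gamma(t)) = -\big(\dot\gamma(t),\dot\gamma(t)\big)_{\gamma(t)}$, and finally transport the norm to the identity using right invariance: with $\dot\gamma(t)=v(t).\gamma(t)$,
\[
    \big(\dot\gamma(t),\dot\gamma(t)\big)_{\gamma(t)}
    = \big(v(t).\gamma(t),v(t).\gamma(t)\big)_{\gamma(t)}
    = \big(v(t),v(t)\big)_e
    = \langle A v(t),v(t)\rangle
    = \lVert v(t)\rVert_A^2 .
\]
Combining the last two identities yields $\lVert v(t)\rVert_A^2 = -\frac{d}{dt}E(\gamma(t))$, as claimed.

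I do not expect a genuine obstacle here: this is the textbook monotonicity computation for Riemannian gradient flows, and each step is legitimate in the finite-dimensional (or Hilbert) category, where $TR_{\gamma(t)}$ is a bounded isomorphism and the chain rule applies to a $C^1$ curve. The only point worth flagging is the \emph{purpose} of the identity rather than any difficulty in it: integrating in $t$ shows that $E\circ\gamma$ is non-increasing and that it controls $\int_0^t\lVert v\rVert_A^2$, which is exactly the a priori estimate fed into the continuation argument for global existence. The delicate analogue of this bound appears only later, in the Sobolev diffeomorphism setting where the right-invariant metric need not be strong; for Lemma~\ref{lem:estimate} as stated no such subtlety enters.
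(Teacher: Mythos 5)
Your proposal is correct and follows essentially the same route as the paper: chain rule, the defining property of the gradient, substitution of the flow equation, and right invariance to transport the squared norm to the identity where it becomes $\langle Av,v\rangle=\lVert v\rVert_A^2$. The paper compresses these steps into a single displayed chain of equalities; your version merely spells them out.
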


\begin{proof}
    Using the right-invariance of the Riemannian metric on $\cG$ we obtain
    \[
        \frac{d}{dt} E(\gamma) = \langle \nabla E(\gamma),\dot\gamma\rangle_{\gamma}
        = -\langle \dot\gamma,\dot\gamma\rangle_{\gamma}
        = -\langle Av, v\rangle = -\lVert v\rVert_A^2.
    \]
\end{proof}

\begin{teo}\label{teo:global-existence-finite}
    Let $E\colon\cG\to \mathbb{R}_+$ be such that the gradient vector field $\nabla E$ on $\cG$ is locally Lipschitz continuous.
    Then the gradient flow \eqref{eq:gradient-flow-finite-dim} admits a unique global solution $\gamma\colon [0,\infty) \to \cG$.
\end{teo}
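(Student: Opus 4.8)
The plan is to upgrade the local existence (guaranteed by Picard–Lindelöf on the Banach manifold $\cG$, since $\nabla E$ is locally Lipschitz) to a global statement by showing that a maximal solution cannot escape to the ``boundary at infinity'' in finite time. Suppose $\gamma\colon[0,T)\to\cG$ is the maximal solution with $T<\infty$. The standard escape lemma for ODEs on (metrically) complete manifolds says that if $\gamma$ stays within a bounded set, or more precisely if $\gamma(t)$ is Cauchy as $t\to T^-$, then the solution extends past $T$, contradicting maximality. So the whole argument reduces to: \emph{the solution curve has finite length on $[0,T)$.}

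The key estimate is Lemma 2.8: along the flow, $\lVert v(t)\rVert_A^2 = -\frac{d}{dt}E(\gamma(t))$ where $v(t)=\dot\gamma(t)\cdot\gamma(t)^{-1}$. First I would integrate this to get the energy balance
\[
    \int_0^t \lVert v(\tau)\rVert_A^2\,d\tau = E(g_0) - E(\gamma(t)) \le E(g_0),
\]
using that $E\ge 0$ by hypothesis. Hence $v\in L^2([0,T);\mathfrak g)$ with norm controlled by $E(g_0)$. The Riemannian length of $\gamma$ on $[0,t]$ is $\int_0^t \lVert \dot\gamma(\tau)\cdot\gamma(\tau)^{-1}\rVert_A\,d\tau = \int_0^t \lVert v(\tau)\rVert_A\,d\tau$, and by Cauchy–Schwarz on the finite interval $[0,T)$ this is at most $\sqrt{T}\,\bigl(\int_0^T\lVert v\rVert_A^2\bigr)^{1/2} \le \sqrt{T\,E(g_0)} < \infty$. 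Therefore $\gamma$ has finite length. By the definition of the Riemannian distance $d$ recalled just before Lemma 2.8, for $0\le t_1<t_2<T$ we have $d(\gamma(t_1),\gamma(t_2)) \le \int_{t_1}^{t_2}\lVert v\rVert_A\,d\tau \to 0$ as $t_1,t_2\to T^-$, so $\{\gamma(t)\}$ is a $d$-Cauchy net.

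At this point I would invoke completeness to conclude that $\gamma(t)$ converges to some $g_*\in\cG$ as $t\to T^-$; then restarting the local existence theorem at $g_*$ extends the solution beyond $T$, contradicting maximality of $T$, so $T=\infty$. The main obstacle is the completeness input: one needs the metric space $(\cG,d)$ to be complete (equivalently, geodesically complete in a suitable sense, or at least that bounded-length Cauchy curves have limits). For finite-dimensional Lie groups with a right-invariant metric this holds, but I would state it as a hypothesis or note where it is used; in the infinite-dimensional diffeomorphism setting of Section 3 this is exactly the delicate point that forces the extra work there. A secondary, more routine subtlety is that Lemma 2.8 requires $C^1$ regularity of the solution curve, which is automatic for solutions produced by Picard–Lindelöf since $\nabla E$ is continuous.
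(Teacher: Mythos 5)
Your argument is essentially identical to the paper's proof: both use Lemma~\ref{lem:estimate} to get the energy balance, apply Cauchy--Schwarz (H\"older) to bound the Riemannian length of the solution curve by $\sqrt{t\,E(g_0)}$, and conclude that the curve cannot escape in finite time, so the maximal solution extends to $[0,\infty)$. Your explicit flagging of the completeness of $(\cG,d)$ as the input needed to convert the Cauchy property into convergence is a point the paper leaves implicit (it simply asserts ``we cannot have blowup''), and correctly anticipates why the infinite-dimensional case in Theorem~\ref{teo:local-existence} requires the extra hypothesis $k\geq s$.
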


\begin{proof}
    Since $\nabla E$ is locally Lipschitz, it follows from standard ODE theory (cf.\ \cite{lang1999}) that the flow \eqref{eq:gradient-flow-finite-dim} admits a local solution $\gamma\colon [0,\epsilon)\to \cG$.
    Let $t\in [0,\epsilon)$.
    Then, from the definition of the Riemannian distance and the Hölder inequality we obtain
    \[
        d(\gamma(0),\gamma(t)) \leq \int_0^t \lVert v(s) \rVert_A ds
        \leq \sqrt{t} \left(\int_0^t \lVert v(s)\rVert^2_A ds \right)^{1/2}.
    \]
    Using Lemma~\ref{lem:estimate} this yields
    \[
        d(\gamma(0),\gamma(t)) \leq \sqrt{t \big( E(\gamma(0)) - E(\gamma(t)) \big)} \leq \sqrt{t E(\gamma(0))}.
    \]
    This means that $\gamma$ is $\frac{1}{2}$-Hölder continuous on $[0,\epsilon)$.
    In particular, we cannot have blowup as $t\to\epsilon$.
    It then follows by the general theory of maximal solutions of ODEs that the solution can be extended to $[0,\infty)$.
\end{proof}

\section{Sobolev setting for shape analysis}\label{section:3}
The previous section discusses gradient flows on Lie groups in an abstract setting.
These results are certainly valid for finite-dimensional Lie groups, but more generally also for the category of Hilbert Lie groups with strong Riemannian metrics (cf.~\cite{lang1999}).
However, the group $\cD(M)$ of smooth diffeomorphisms is not a Hilbert Lie group; it is a Fréchet Lie group (cf.~\cite{hamilton1982inverse}).

The results from \autoref{sec:gradflows} are not valid in the category of Fréchet manifolds, essentially because the fixed point theorem is not valid there.
The trick is to instead work in a Hilbert manifold setting via Sobolev completions of $\cD(M)$. 
The resulting completion $\cD^s(M)$ is a Hilbert manifold if $s> \operatorname{dim}(M)/2+1$.
But now the Lie group structure is lost; $\cD^s(M)$ is not a Lie group in the Hilbert manifold category (although it is a topological group).
For this reason Theorem~\ref{teo:local-existence} is not a trivial application of the results in \autoref{sec:gradflows}.

In the remaining part of this paper we give results to assert that the gradient vector field $\nabla E(\varphi)$ extended to $\cD^s(M)$ is 
\begin{enumerate}
    \item given as in Proposition~\ref{teo:gradient-functional}; 
    \item a smooth vector field on $\cD^s(M)$.
\end{enumerate}
We can then prove Theorem~\ref{teo:local-existence} via the extension of the Picard-Lindelöf theorem to Hilbert manifolds.
Our approach is a gradient flow analog of the approach to geodesic flows on $\cD^s(M)$ developed by Ebin and Marsden~\cite{ebin1970groups}.

We start in this section with the background material needed for a rigorous treatment of the infinite-dimensional Sobolev setting.
More details can be found in \cite{ebin1970groups,bruveris2017completeness}.

\subsection{Diffeomorphisms of Sobolev class}\label{S-diffeo-section}
Let $M$ be an oriented, compact, $n$-dimensional and $C^\infty$-smooth Riemannian manifold (without boundary) and fix $s$ such that $	s > n/2 +1.$ 
The subset $\cD^s(M)$ of the Sobolev space $H^s(M, M)=W^{s,2}(M,M)$ formed by $H^s$ auto\-morphisms which are $C^1$-diffeomorphisms is a Hilbert manifold and an open set in $H^s(M,M)$, see \cite[\textsection2]{ebin1970groups}.
Moreover, it is a topological group under the composition map and inversion. 
The tangent space at the identity, denoted by $T_{\id}\cD^s(M)$, is the space
of all $H^s$-smooth vector fields on $M$. Such vector fields are $H^s$-smooth sections of the tangent bundle $\X^s(M)$  \cite{ebin1970groups}. The tangent bundle of $\cD^s(M)$ is constructed as the subset of $H^s(M, TM)$ with elements that give elements of $\cD^s(M)$ when composed with the natural projection $\pi:TM\to M$. Precisely,
\begin{equation}
\label{tangentspace}
T_\varphi\cD^s(M) = \left\{U\in H^s(M,TM): \pi\circ U=\varphi\right\}.
\end{equation}
Writing $U=\xi\circ\varphi$ for some $\xi\in\X^s(M)$ we may identify vectors on the tangent spaces of $\cD^s(M)$ by the set $\left\{\xi\circ \varphi :\xi\in T_{\id}\cD^s(M)\right\}.$
It turns out that such compositions with diffeomorphisms provide sufficient control of the smoothness.
Let $N$ be a compact $n$-manifold and consider the maps $\alpha_\varphi: H^s(M, N) \to H^s(M, N)$ and $\omega_\psi: \cD^s(M)\to H^s(M, N)$ defined by 
\begin{equation}
 \alpha_\varphi(\psi) = \psi\circ \varphi\quad \text{and}\quad \omega_\psi (\varphi) = \psi\circ \varphi,
\end{equation}
where $\varphi\in\cD^s(M)$ and $\psi\in H^s(M, N)$.

\begin{lem}[\cite{ebin1970groups} ``Alpha Lemma.''] 
\label{alpha-lemma}
The mapping $\alpha_\varphi$ is $C^\infty$ smooth. Its tangent map is given by $(\psi,\dot\psi)\mapsto (\psi\circ\varphi,\dot\psi\circ\varphi)$.
\end{lem}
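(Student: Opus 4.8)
The plan is to realise $\alpha_\varphi$ as the restriction of a \emph{bounded linear map on a Hilbert space}, so that $C^\infty$ smoothness becomes automatic. First I would fix an isometric embedding $N\hookrightarrow\R^K$ (Nash). This identifies $H^s(M,N)$ with the subset of the Hilbert space $H^s(M,\R^K)$ consisting of those maps whose image lies in $N$; by the standard theory of manifolds of Sobolev maps (Palais~\cite{Pa1968}, Ebin--Marsden~\cite{ebin1970groups}, valid since $s>n/2$), this subset is a closed $C^\infty$ submanifold of $H^s(M,\R^K)$, whose tangent space at a map $\psi$ is $\{\dot\psi\in H^s(M,\R^K):\dot\psi(x)\in T_{\psi(x)}N\ \text{for all}\ x\}$. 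Under this picture $\alpha_\varphi$ is the restriction to $H^s(M,N)$ of the linear map $\tilde\alpha_\varphi\colon H^s(M,\R^K)\to H^s(M,\R^K)$, $f\mapsto f\circ\varphi$, which visibly carries the submanifold into itself.

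Second, I would show that $\tilde\alpha_\varphi$ is a bounded operator on $H^s(M,\R^K)$. This is the only analytic point. Using finitely many coordinate charts on $M$ with a subordinate partition of unity, it reduces to: if $\varphi$ is an $H^s$ diffeomorphism of a bounded domain in $\R^n$ with $H^s$ inverse, then $f\mapsto f\circ\varphi$ is bounded on $H^s$. Since $\varphi\in\cD^s(M)$ is in particular a bi-Lipschitz $C^1$ diffeomorphism, the change of variables $y=\varphi(x)$ gives $\lVert f\circ\varphi\rVert_{L^2}\le C\lVert f\rVert_{L^2}$, and the estimate for higher derivatives follows by induction on $\lceil s\rceil$ via the chain rule $D(f\circ\varphi)=(Df\circ\varphi)\,D\varphi$, together with the fact that $H^{s-1}$ is a multiplication algebra and a module over itself because $s-1>n/2$; non-integer $s$ is handled by interpolating between two consecutive integer exponents, both $>n/2+1$. (This is precisely the estimate behind the ``omega lemma'' of~\cite{ebin1970groups}, which one may alternatively just invoke.) A bounded linear map between Banach spaces is $C^\infty$, its derivative at every point being the map itself.

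Third, smoothness of $\alpha_\varphi$ follows formally: it is the restriction of the $C^\infty$ map $\tilde\alpha_\varphi$ to a $C^\infty$ submanifold that $\tilde\alpha_\varphi$ maps into itself, hence is $C^\infty$ as a map $H^s(M,N)\to H^s(M,N)$. Finally, differentiating $\tilde\alpha_\varphi$ gives $T_f\tilde\alpha_\varphi(\dot f)=\dot f\circ\varphi$; restricting to tangent vectors of $H^s(M,N)$ and recording base points yields the stated tangent map $(\psi,\dot\psi)\mapsto(\psi\circ\varphi,\dot\psi\circ\varphi)$.

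I expect the boundedness of $\tilde\alpha_\varphi$ on $H^s$ to be the main obstacle; everything else is bookkeeping. Within that step the subtle parts are the loss-of-derivative accounting in the iterated chain rule — controlled by the algebra/module property of $H^{s-1}$, which is exactly why one needs $s>n/2+1$ — and the reduction of non-integer $s$ to the integer case by interpolation.
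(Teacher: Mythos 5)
Your argument is correct and is essentially the standard proof from Ebin--Marsden that the paper cites without reproducing: right-composition with a fixed $\varphi\in\cD^s(M)$ is the restriction of a \emph{bounded linear} (hence $C^\infty$) operator on $H^s(M,\R^K)$ to the embedded submanifold $H^s(M,N)$, with the boundedness on $H^s$ coming from the change of variables, the chain rule, and the algebra property of $H^{s-1}$ for $s>n/2+1$. The only cosmetic remark is that the embedding $N\hookrightarrow\R^K$ need not be isometric for this purpose; any smooth closed embedding with a tubular neighbourhood suffices.
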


\begin{lem}[\cite{ebin1970groups} ``Omega Lemma.''] 
\label{omega-lemma}
The mapping $\omega_\psi$ is $C^l$ if $\psi\in H^{s+l}(M,N)$.
\end{lem}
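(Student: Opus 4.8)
\emph{Plan.} The strategy is to reduce to a Euclidean model and then induct on $l$, identifying the Fréchet derivative of $\omega_\psi$ with a composition operator whose weight has lost exactly one Sobolev degree, so that the inductive hypothesis applies to it. First I would reduce the target: since $N$ is compact it embeds in some $\R^k$ (Whitney), and $H^s(M,N)$ is then a $C^\infty$ submanifold of the Hilbert space $H^s(M,\R^k)$; because $\omega_\psi$ takes values in $H^s(M,N)$, a map is $C^l$ into $H^s(M,N)$ iff it is $C^l$ into the ambient space, so it suffices to treat $N=\R^k$, and componentwise $N=\R$. (That $\omega_\psi$ is well defined at all is the composition estimate $H^s\circ\cD^s\subset H^s$.) Using a finite atlas and a subordinate partition of unity all the estimates below are local, so I may work in a Euclidean model: $\psi\in H^{s+l}(\Omega,\R)$ on a bounded domain $\Omega$, $\varphi$ a diffeomorphism of Sobolev class $H^s$, and the model tangent space at $\varphi$ is $H^s(\Omega,\R^n)$ via $U=\xi\circ\varphi$. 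The only external inputs are the standard Sobolev facts (cf.\ \cite{ebin1970groups,bruveris2017completeness}): $H^\sigma$ is a Banach algebra for $\sigma>n/2$; the chain-rule/Moser estimates for compositions; and the change-of-variables formula for $C^1$-diffeomorphisms, whose Jacobian is bounded below.

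\emph{Base case $l=0$.} Here I must show $\omega_\psi$ is continuous for $\psi\in H^s$. Two ingredients: a boundedness estimate $\lVert\psi\circ\varphi\rVert_{H^s}\le C(\lVert\varphi\rVert_{H^s})\lVert\psi\rVert_{H^s}$ with $C$ bounded on neighbourhoods (expand $\partial^\alpha(\psi\circ\varphi)$ by the chain rule, estimate the top-order term $((\nabla^s\psi)\circ\varphi)(\nabla\varphi)^{\otimes s}$ by change of variables and the remaining terms by Sobolev multiplication and composition); and a local Lipschitz estimate $\lVert\psi\circ\varphi_1-\psi\circ\varphi_2\rVert_{H^s}\le C\,\lVert\psi\rVert_{H^{s+1}}\lVert\varphi_1-\varphi_2\rVert_{H^s}$ valid for \emph{smooth} $\psi$. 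Given $\varphi_n\to\varphi$ in $\cD^s$ and $\varepsilon>0$, split $\psi=\psi_{\mathrm{sm}}+\psi_{\mathrm{hf}}$ with $\psi_{\mathrm{sm}}$ smooth and $\lVert\psi_{\mathrm{hf}}\rVert_{H^s}<\varepsilon$: the smooth part of $\psi\circ\varphi_n$ converges by the Lipschitz estimate, while the high-frequency part is bounded by $C\varepsilon$ uniformly in $n$ by the boundedness estimate, so $\limsup_n\lVert\psi\circ\varphi_n-\psi\circ\varphi\rVert_{H^s}\le C\varepsilon$ for all $\varepsilon$, giving continuity. (The Alpha Lemma, which makes $\psi\mapsto\omega_\psi(\varphi)$ continuous, is what licenses the splitting.)

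\emph{Inductive step.} Assume the model statement for $l-1$ and let $\psi\in H^{s+l}$ with $l\ge1$. The candidate derivative is $D\omega_\psi(\varphi)\cdot U=(\nabla\psi\circ\varphi)\cdot U$, i.e.\ $D\omega_\psi=B\circ(\omega_{\nabla\psi}\times\id)$, where $B\colon H^s(\Omega,\R^n)\times H^s(\Omega,\R^n)\to H^s(\Omega,\R)$ is the pointwise-contract-and-multiply map, bilinear and continuous since $s>n/2$. Now $\nabla\psi\in H^{s+(l-1)}$, so by the inductive hypothesis $\omega_{\nabla\psi}\colon\cD^s\to H^s$ is $C^{l-1}$ (it lands in $H^s$ because $s+l-1\ge s$); post-composing with the bounded linear map $g\mapsto B(g,\cdot)$ shows $\varphi\mapsto D\omega_\psi(\varphi)$ is $C^{l-1}$. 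That this candidate is the actual Fréchet derivative follows from Taylor's formula with integral remainder applied pointwise, $\psi\circ\varphi_1-\psi\circ\varphi_0-(\nabla\psi\circ\varphi_0)\cdot(\varphi_1-\varphi_0)=\int_0^1\big((\nabla\psi\circ(\varphi_0+t(\varphi_1-\varphi_0)))-(\nabla\psi\circ\varphi_0)\big)\cdot(\varphi_1-\varphi_0)\,\rd t$, whose $H^s$ norm is $o(\lVert\varphi_1-\varphi_0\rVert_{H^s})$ by the Banach algebra property and the continuity of $\omega_{\nabla\psi}$ (the $l-1\ge0$ instance of the lemma). Thus $\omega_\psi\in C^1$ with $D\omega_\psi\in C^{l-1}$, i.e.\ $\omega_\psi\in C^l$, completing the induction.

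\emph{Main obstacle.} I expect the delicate point to be the base case: left composition with a function of exactly the target regularity $H^s$ loses no derivative only because of the structure of the chain rule, and upgrading the resulting bound to genuine \emph{continuity} forces the density/splitting argument, which relies on boundedness constants that are locally bounded in $\varphi$ — here compactness of $M$ and the lower Jacobian bound for $C^1$-diffeomorphisms are essential. Everything in the inductive step beyond the remainder estimate is formal, being composition of maps of already-known regularity. An alternative to the Euclidean reduction is to run the induction for fibrewise-linear bundle maps over $M$, as in \cite{ebin1970groups}, but the localized version keeps the bookkeeping lighter.
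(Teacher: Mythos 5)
The paper offers no proof of this lemma---it is quoted directly from Ebin and Marsden---and your argument reproduces, in sound outline, the standard proof from that source: induction on $l$, with $D\omega_\psi$ factored through $\omega_{T\psi}$ (one Sobolev degree lower, so the inductive hypothesis applies) and the base case handled by density of smooth maps together with a locally uniform composition bound. Your sketch is correct; the only point to watch is that the uniformity of the constant in $\lVert\psi\circ\varphi\rVert_{H^s}\le C(\varphi)\lVert\psi\rVert_{H^s}$ along the convergent sequence $\varphi_n$ requires the uniform lower Jacobian bound, which you do flag.
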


In particular, the left and right translation mappings on the group of diffeomorphisms are continuous and smooth, respectively.
The fact that left translation is only continuous is the reason why $\cD^s(M)$ is a topological group but not a Lie group.
Right translation is smooth and provides a right-invariant inner product at any point $\varphi\in\cD^s(M)$ by the formula
\begin{equation}\label{eq:D-metric}
	(\xi,\eta)_e=(\xi\circ\varphi,\eta\circ\varphi)_\varphi\qquad \text{where } \xi,\eta\in T_e\cD^s(M)=\X^s(M),
\end{equation}
and where $(\,,)_e$ is an inner product on $\X^s(M)$.
We may now work in the so-called ``smooth dual.'' 
This dual is defined as follows: given an inertia operator $A\colon\X^s(M)\to\X^{s-k}(M)$ of $k$th order we identify $\X^{s-k}(M)$ as the dual of $\X^s(M)$.
Then, if $A$ is positive, it induces an inner product of the form~(\ref{eq:inertia_operator}). 

The inverse map $\varphi\mapsto\varphi^{-1}$ for $\cD^s(M)$ is continuous, or $C^l$ as a map $\cD^{s+l}(M)\to\cD^s(M)$. If $t\mapsto\psi(t)$ is a $C^1$ curve in $\cD^{s+l}(M),\ l\geq 1$, then
\begin{equation}
\label{eq:d-id}
    0=\frac{d}{dt}\left(\psi(t)^{-1} \circ\psi(t)\right)=\left(\frac{d}{dt}\psi(t)^{-1} \right)\circ\psi(t)+D\psi^{-1}(t)\circ\left(\frac{d}{dt}\psi(t)\right).
\end{equation}
These computations will we used in \textsection\ref{sec:mom-map-on-function} to derive our main results.

As right translation is a smooth map we may additionally define right-invariant vector fields on $\cD^s(M)$. Indeed, given $\xi\in\X^{s+l}(M)$, $l\geq1$, the composition $\xi\circ\varphi$ with $\varphi\in\cD^s(M)$ is right invariant.
The vector fields $\varphi\mapsto \xi\circ\varphi$ respect the vector field commutator \cite[p. 91]{marsden1970}, which motivates the nomenclature ``Lie-algebra'' for the vector space $T_{\id}\cD^s(M)$.


\subsection{The space of Riemannian metrics}\label{space-Rmetrics-section}
We denote by \edit{$S_{0,2}(M)$} the bundle of symmetric \edit{$(0,2)$-tensor fields} on $M$.
\edit{Likewise, $S_{2,0}(M)$ denotes symmetric $(2,0)$-tensor fields.}
Let $s>n/2+1$ as before and \edit{denote by $S_{0,2}^{s-1}(M)$ and $S_{2,0}^{s-1}(M)$ the Hilbert spaces obtained by $H^s$ completion.}
We write $C^0\Met(M)$ for the space of continuous Riemannian metrics and define the set of $H^{s-1}$-smooth Riemannian metrics as $\Met^{s-1}(M)=S_{0,2}^{s-1}(M)\cap C^0\Met(M)$. Since $H^{s-1}\subseteq C^0(S_{0,2}(M))$ and the embedding \edit{is} continuous, we have that $\Met^{s-1}(M)$ is open in $S_{0,2}^{s-1}(M)$. 
In fact, it is an open, convex, positive cone in $S_{0,2}^{s-1}(M)$. 
The space $\Met^{s-1}(M)$ is a smooth Hilbert manifold \cite{smolentsev2007spaces}, and $T\Met^{s-1}(M) \simeq \Met^{s-1}(M)\times S_{0,2}^{s-1}(M)$. 

There is a canonical metric on the space of metrics, for which one can compute the geodesics explicitly \cite{FrGr1989,gil1992the}, but we will use another metric for our purposes. 
Indeed, the following defines an $L^2$-type distance on $\Met^{s-1}(M)$:
\begin{equation}
    \|h_1-h_2\|_{L^2}^2 = \int_{M} \left[(h_1)^{ij}-(h_2)^{ij}\right] \left[(h_1)_{ij}-(h_2)_{ij}\right]\, d\mu
\end{equation}
for $h_1,h_2\in \Met^{s-1}(M)$.

%
%

%

\section{Gradient flow on diffeomorphisms}\label{section:gradflow-diffeos}
In this section we turn our attention to gradient flows of the form in \autoref{sec:gradflows} for the special case where 
\begin{equation}\label{special-case}
    \begin{split}
    & \cG=\cD^s(M) \\
    & Q=H^s(M,\R)\times \Met^{s-1}(M).
    \end{split}
\end{equation}
%
Indeed, we consider the gradient flow $\dot{\vp}= -\nabla E(\vp)$, with $\vp(0)= \id$, where $\id$ is the identity mapping on $M$ and where 
\begin{align*}
    E(\varphi) =\underbrace{\frac{1}{2}\|I_0\circ\varphi^{-1}-I_1\|_{L^2}^2}_{E_1(\varphi) = f_1(I_0\circ\vp^{-1})} +  \underbrace{\frac{\sigma}{2}\|\vp_*g-g\|_{L^2}^2}_{E_2(\varphi) = f_2(\vp_*g)}
\end{align*}
for $\sigma>0$ constant.
Recall that the template and the source are fixed functions $I_0, I_1 \in H^s(M,\mathbb R)$.
We show that the gradient flow equation for this energy functional is as stated in equation \eqref{eq:gradient-flow-explicit}, and that it fulfills the geometric structure stated in Proposition~\ref{teo:gradient-functional}, as an infinite-dimensional example of the abstract gradient flow structure on Lie groups discussed in \autoref{sec:gradflows}.
Notice that the energy functional splits into two components, $f_1$ and $f_2$, corresponding to the action of $\cD^s(M)$ on the two components of $Q$.

For concreteness, let us restate all the components of the flow explicitly:

\edit{
\begin{prop}[\emph{cf.}~Proposition~\ref{teo:gradient-functional}] \label{prop:gradient_flow}
The gradient system on $G=\cD^s(M)$ is given as

\begin{subequations}\label{gradient_explicit}
    \begin{align}
        \dot{\vp}&= -\nabla E(\varphi) = -v\circ\vp, \quad Av = J\Big(
            I_0\circ\vp^{-1}, \vp_*g, \underbrace{I_0\circ\vp^{-1}-I_1}_{df_1(I_0\circ\vp^{-1})}, \underbrace{\vp_*g-g}_{df_2(\vp_*g)}
        \Big) 
    \end{align}
    where $A=(1-\alpha\Delta)^k$ as in \eqref{eq:inertia_operator} and the momentum map $J$ is given by
    \begin{equation}
        J(I, h, P, p) = \underbrace{-P\nabla I}_{J_1(I,P)} + \underbrace{2\operatorname{div}_h(p)}_{J_2(h,p)}
    \end{equation}
\end{subequations}%
with $\operatorname{div}_h(\alpha) =  \mathrm{tr}_h(\operatorname{div}\alpha) + \mathrm{tr}_{V,W}\left[ \alpha(V,W)\nabla h(V,\cdot,W)-\alpha(V,W)\nabla h(\cdot,V,W)/2\right]$ for  $h=\varphi_*g\in \Met^{s-1}(M)$  and $\nabla h(U,V,W):=\nabla_U h(V,W)$.

\end{prop}
}

%

To verify the equations \eqref{gradient_explicit} we now extend the computations in the proof of Proposition~\ref{teo:gradient-intro} to the Sobolev space setting.
Thus, we need to compute the momentum map $J$.
Since $Q$ is a direct product, it follows that the momentum map $J$ is decomposed into the two momentum maps: $J_1$, for the action on functions, and $J_2$, for the action on Riemannian metrics.
 

\subsection{Momentum map for the action on functions}\label{sec:mom-map-on-function}
Recall that the push-forward action of $\cD^s(M)$ on $I\in H^s(M,\R)$ is $\varphi.I \coloneqq I\circ\varphi^{-1}$. 
The associated infinitesimal action of $\xi\in \mathfrak{X}^s(M)$ is $\xi.I=-\iota_\xi d I$.
We work in the \emph{smooth dual} of $H^s(M,\R)$ by considering only functionals on $H^s(M,\R)$ whose variational derivative is an element of $H^s(M,\mathbb{R})$.
In particular, this is true for the functional \edit{$f_1(I) = \frac{1}{2}\lVert I-I_1\rVert^2$} in the equations~\eqref{gradient_explicit}.
\edit{
On the other hand, since the infinitesimal action involve derivatives, it is \emph{not} enough to restrict $\mathfrak{X}^s(M)^*\simeq \mathfrak{X}^{-s}(M)$ to $\mathfrak{X}^s(M)$.
Instead, the suitable restriction of $\mathfrak{X}^s(M)^*$ is to $\mathfrak{X}^{s-1}(M)$.
}

The momentum map for the infinitesimal action on $H^s(M,\R)$ is defined by
\begin{equation*}
    \langle J_1(I,P), \xi \rangle = \langle P, \xi.I\rangle ,\qquad \forall\, v\in\mathfrak{X}^s(M).
\end{equation*}
A direct calculation and the Sobolev embedding theorem yield the following result.

\begin{lem} \label{prop:Hs-momentum-map}
    Let $s>1+n/2$.
    Then the momentum map for the cotangent lift of the action 
    \begin{equation*}
        \cD^{s}(M)\times H^s(M,\R)\ni (\varphi,I)\mapsto I\circ\vp^{-1}\in H^s(M,\R)
    \end{equation*}
    is a smooth mapping $J_1\colon H^s(M,\R)\times H^s(M,\R)\to \X^{s-1}(M)$ given by
    \begin{equation}
        J_1(I,P)= -P\nabla I.
    \end{equation}

\end{lem}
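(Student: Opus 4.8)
The plan is to compute the momentum map directly from its defining relation and then read off smoothness from the explicit formula together with the Sobolev multiplication theorem. First I would pin down the infinitesimal action: for the push-forward action $\varphi.I = I\circ\varphi^{-1}$, differentiating in $\varphi$ at the identity along $\xi\in\mathfrak{X}^s(M)$ gives $\xi.I = -\iota_\xi dI = -\langle \nabla I,\xi\rangle$ (using the metric $g$ to turn $dI$ into $\nabla I$). This is already recorded in the text. Then I would insert this into the defining equation $\langle J_1(I,P),\xi\rangle = \langle P,\xi.I\rangle$, where the pairing on the right is the $L^2$ pairing on functions (the ``smooth dual'' identification) and the pairing on the left is the $L^2$ pairing between $\mathfrak{X}^{s-1}(M)$ and $\mathfrak{X}^s(M)$ induced by $g$. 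This yields
\[
    \langle J_1(I,P),\xi\rangle = -\int_M P\,\langle \nabla I,\xi\rangle\,\mu_g = -\int_M \langle P\nabla I,\xi\rangle\,\mu_g = \langle -P\nabla I,\xi\rangle ,
\]
valid for all $\xi\in\mathfrak{X}^s(M)$, so $J_1(I,P) = -P\nabla I$ by non-degeneracy of the $L^2$ pairing.

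Next I would check that this formula actually lands in the claimed space and defines a smooth map. Here $P\in H^s(M,\R)$ and $\nabla I\in \mathfrak{X}^{s-1}(M)$ since $I\in H^s$. The Sobolev multiplication theorem says that pointwise multiplication $H^s\times H^{s-1}\to H^{s-1}$ is continuous (indeed bounded bilinear) provided $s-1 > n/2$, equivalently $s>1+n/2$, which is exactly the standing hypothesis; more precisely one needs $H^s$ to be a multiplier on $H^{s-1}$, which holds since $H^s\hookrightarrow C^0$ and the usual Schauder/Moser estimates apply. A continuous bilinear map between Banach spaces is $C^\infty$, so $J_1\colon (I,P)\mapsto -P\nabla I$ is smooth from $H^s(M,\R)\times H^s(M,\R)$ to $\mathfrak{X}^{s-1}(M)$.

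The only genuinely delicate point is verifying that the naive computation of the momentum map is legitimate in the Sobolev category — i.e.\ that the cotangent-lifted action and its momentum map are well defined at the stated regularity, and that restricting to the ``smooth dual'' ($P\in H^s$ rather than $H^{-s}$) is consistent. This is really a bookkeeping matter: one checks that $\xi.I$ depends continuously on $(\xi,I)\in\mathfrak{X}^s\times H^s$ with values in $H^{s-1}$ (again multiplication $H^s\times H^{s-1}\to H^{s-1}$), so the pairing $\langle P,\xi.I\rangle$ makes sense for $P\in H^s\subset (H^{s-1})^*$ and depends smoothly on all arguments, whence the defining equation for $J_1$ has a unique solution given by the formula above. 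No blow-up or loss of derivatives occurs because the action only differentiates $I$ once. I expect this regularity bookkeeping — rather than the identity $J_1=-P\nabla I$ itself, which is a one-line integration-by-parts-free computation — to be the part that needs the most care, and it is precisely where the hypothesis $s>1+n/2$ (and hence the Sobolev embedding $H^{s-1}\hookrightarrow C^0$) is used.
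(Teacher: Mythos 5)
Your proposal is correct and follows exactly the route the paper intends: the paper gives no written proof of this lemma beyond the remark that it follows from ``a direct calculation and the Sobolev embedding theorem,'' and your computation of $J_1(I,P)=-P\nabla I$ from the defining pairing, together with the smoothness of the bilinear map $(I,P)\mapsto -P\nabla I$ via Sobolev multiplication $H^s\times H^{s-1}\to H^{s-1}$, is precisely that calculation. No discrepancies to report.
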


\subsection{Momentum map for the action on metrics}\label{sec:on-metrics}
The push-forward defines an action of $\cD^s(M)$ on the space of metrics $\Met^{s-1}(M)$ by

\begin{equation*}
    \begin{aligned}
    	&\cD^s(M)\times\Met^{s-1}(M)\ni (\varphi,g)\mapsto \varphi_*g\in \Met^{s-1}(M) \\
    	&(\varphi_*g)_x(u,v)=g_{\varphi^{-1}(x)}\left(d\varphi^{-1}u,d\varphi^{-1}v\right),\quad x\in M,\ u,v\in T_{x}M
    \end{aligned}
\end{equation*}%
The corresponding infinitesimal action of $\xi\in\X^s(M)$ is given by minus the Lie derivative: $\xi . g = -\cL_\xi g$.

Analogous to the previous section, we now compute 
the properties of the momentum map that we require for the proof of Theorem \ref{teo:local-existence}. 
In particular, we compute $J_2$ in coordinates on $M$. 
The raising and lowering of indices (musical isomorphisms) are always with respect to the fixed Riemannian metric $g$.

\edit{In this section, we restrict $\X^s(M)^*$ to $\X^{s-2}(M)$.}
Notice that this is different from the setting in \autoref{sec:mom-map-on-function}: one extra derivative is needed.
The reason is that the action of $\cD^s(M)$ on $\Met^{s-1}(M)$ requires a derivative on the inverse of the diffeomorphism, but the action on functions does not.



\begin{lem}
\label{prop:Met-momentum-map}
Let $s>2+n/2$.
Then the momentum map for the cotangent lift of the action
$$ \cD^{s}(M)\times \Met^{s-1}(M)\ni (\varphi,g)\mapsto \varphi_*g\in\Met^{s-1}(M)$$
is a smooth mapping $\Met^{s-1}(M)\times S_{2,0}^{s-1}(M)\to \X^{s-2}(M)$ given by
\begin{equation}
 J_2(h,p)=2\mathrm{tr}_h(\operatorname{div}p)+ \mathrm{tr}_{V,W}\left[2 p(V,W)\nabla h(V,\cdot,W)-p(V,W)\nabla h(\cdot,V,W)\right],
\end{equation}
where $\nabla h(U,V,W):=\nabla_U h(V,W)$.
\end{lem}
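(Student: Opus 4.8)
The plan is to compute the momentum map $J_2$ directly from its defining relation $\langle J_2(h,p),\xi\rangle = \langle p,\xi.h\rangle = -\langle p,\mathcal{L}_\xi h\rangle$ and then to integrate by parts to move all derivatives off $\xi$. First I would fix local coordinates $(x^i)$ on $M$ and write everything in components with respect to the fixed background metric $g$, using the comma notation introduced just above the statement. Writing the pairing as $\langle p,\mathcal{L}_\xi h\rangle = \int_M p^{ij}(\mathcal{L}_\xi h)_{ij}\,\mu_g$ and expanding the Lie derivative in coordinates, $(\mathcal{L}_\xi h)_{ij} = \xi^k h_{ij,k} + h_{kj}\xi^k_{,i} + h_{ik}\xi^k_{,j}$, I would substitute this in and integrate by parts on the two terms containing derivatives of $\xi$. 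Because the manifold is compact without boundary, there are no boundary contributions, and the result is an expression of the form $\int_M (J_2(h,p))_k \xi^k \,\mu_g$ from which one reads off $J_2(h,p)$ as a vector field with components built from $p$, $h$, and their first derivatives. The coefficient of $\xi^k$ will naturally split into a piece that looks like $-2(p^{ij}h_{kj})_{,i}$ together with the undifferentiated term $p^{ij}h_{ij,k}$; reorganizing and raising the free index gives the two traces $2\,\mathrm{tr}_h(\mathrm{div}\,p)$ and the correction terms involving $\nabla h$.

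The second half of the statement asserts the coordinate-free rewriting $J_2(h,p) = 2\,\mathrm{div}_h(p)$, where now the divergence, musical isomorphisms and volume form are all taken with respect to $h$ rather than $g$. For this I would recall the standard identity that for any vector field $\xi$ one has $\frac{1}{2}\mathcal{L}_\xi h = \mathrm{sym}(\nabla^h \xi^\flat)$ where $\flat$ and $\nabla^h$ are taken with $h$, so that $\langle p,\mathcal{L}_\xi h\rangle_h = 2\int_M p^{ij}\nabla^h_i \xi_j \,\mu_h$; integrating by parts with respect to the $h$-compatible connection (whose divergence theorem has no boundary term) yields $-2\int_M (\nabla^h_i p^{ij})\xi_j\,\mu_h$, which is exactly $\langle -2\,\mathrm{div}_h(p)^\sharp,\xi\rangle_h$ once we note the defining relation uses $\xi.h = -\mathcal{L}_\xi h$. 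The equivalence of the two formulas then amounts to checking that the difference between the $g$-divergence expression (with its $\nabla h$ corrections) and the $h$-divergence expression vanishes identically — this is a consequence of the formula relating the Levi-Civita connections of two metrics, i.e. the difference tensor $\Gamma(h)-\Gamma(g)$ expressed through $\nabla^g h$, and the two $\nabla h$ terms in the first formula are precisely the contractions of that difference tensor with $p$ and $h$.

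The remaining assertions are the domain/codomain statement $\Met^{s-1}(M)\times S_2^{s-1}\to\mathfrak{X}^{s-2}(M)$ and smoothness of the map. For the regularity count: $p\in S_2^{s-1}$ is $H^{s-1}$, $h\in\Met^{s-1}(M)$ is $H^{s-1}$, and $J_2$ involves $\mathrm{div}\,p$ (one derivative of $p$, hence $H^{s-2}$) and products of $p$ with $\nabla h$ (one derivative of $h$, hence $H^{s-2}$, times $H^{s-1}$). Since $s > 2 + n/2$ we have $s-1 > 1 + n/2$, so $H^{s-1}$ is a multiplicative algebra and is closed under multiplication by $H^{s-2}$; hence every term lands in $H^{s-2}$ and $J_2(h,p)\in\mathfrak{X}^{s-2}(M)$. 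Smoothness of $(h,p)\mapsto J_2(h,p)$ follows because it is built from the bounded bilinear multiplication maps on Sobolev spaces together with the bounded linear differentiation operators, all of which are smooth (indeed polynomial); one also uses that inversion $h\mapsto h^{-1}$ on the open cone $\Met^{s-1}(M)$ is smooth when writing $\mathrm{tr}_h$ in terms of the inverse metric, which is the analog here of the ``$\varphi\mapsto\varphi^{-1}$'' smoothness used elsewhere.

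The main obstacle I anticipate is the bookkeeping in reconciling the two expressions for $J_2$: the coordinate integration-by-parts produces an answer adapted to the fixed metric $g$ (Christoffel symbols of $g$, volume form $\mu_g$), whereas the clean statement $2\,\mathrm{div}_h(p)$ is adapted to $h$. Carefully tracking how $\mu_h = \sqrt{\det h/\det g}\,\mu_g$ interacts with the integration by parts, and verifying that the $\nabla^g h$-correction terms assemble exactly into the connection-difference tensor between $\nabla^h$ and $\nabla^g$, is where the computation is delicate and where sign and factor-of-two errors are most likely to creep in; everything else is routine Sobolev multiplication-algebra estimates.
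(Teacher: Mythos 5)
Your treatment of the main formula is exactly the paper's: expand $\langle p,-\mathcal{L}_V h\rangle$ in local coordinates (the paper writes the Lie derivative using the $g$-covariant derivative rather than partials, which makes the integration by parts against $\mu_g$ slightly cleaner, but this is cosmetic), integrate by parts on the two terms carrying derivatives of $V$ using compactness of $M$, and read off $J_2(h,p)_j = 2(\nabla_i p^{im})h_{jm} + 2p^{im}\nabla_i h_{jm} - p^{im}\nabla_j h_{im}$. Your regularity count and smoothness argument also match the paper's (which is terser, simply citing $s>n/2+2$ and smoothness of the covariant derivative, but rests on the same Sobolev multiplication facts).

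Where you diverge is the ``equivalently'' clause, and there you are making the step both harder and subtly wrong. The paper does not assert that the $g$-based expression and $2\operatorname{div}_h(p)$ are equal as vector fields; it asserts that if $h$ is taken to play the role of the background metric throughout --- connection, volume form, \emph{and} musical isomorphisms, hence also the identification of the dual of $\mathfrak{X}^s(M)$ with vector fields --- then $\nabla^h h=0$ annihilates the two correction terms and only the divergence term survives, giving $2\operatorname{div}_h(p)^{\flat_h}$. That is a one-line observation. Your plan to check that ``the difference between the $g$-divergence expression and the $h$-divergence expression vanishes identically'' cannot succeed as stated: the two formulas are representatives of the \emph{same linear functional} under \emph{different} dual pairings ($\mu_g$ versus $\mu_h$, $g$-index-raising versus $h$-index-raising), so as vector fields they differ by the density factor $\sqrt{\det h/\det g}$ and the change of musical isomorphism, and the connection-difference tensor alone will not reconcile them. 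Your independent derivation of $2\operatorname{div}_h(p)$ via $\tfrac{1}{2}\mathcal{L}_\xi h=\operatorname{sym}(\nabla^h\xi^{\flat})$ and $h$-integration by parts is correct and is a perfectly acceptable substitute for the paper's remark; just drop the attempted pointwise identification of the two expressions, which is the ``delicate'' reconciliation you flagged and which is in fact not required.
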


\begin{proof}
The infinitesimal action on $h\in\Met^{s-1}(M)$ is $\xi.h= -\cL_\xi(h)$.
In local coordinates we have
\begin{align}
    \langle J_2(h,p),V\rangle 
    &\coloneqq  \langle p,-\cL_Vh\rangle\\
    &=  -\int_M p^{ij}(\cL_Vh)_{ij} d\mu.
\end{align}
In these local coordinates, the Lie derivate can be expressed as
\begin{equation}
    (\cL_V h)_{ij} = V^m\nabla_m h_{ij}+ (\nabla_i V^m)h_{mj}+(\nabla_j V^m) h_{im},
\end{equation}
where $\nabla$ is the Levi-Civit\`a connection associated with $g$.
Therefore, we compute using integration by parts
\begin{align}
    J_2(h,p)(V) = & -\int_M p^{im}(\cL_Vh)_{im} d\mu\\
    = & -\int_M p^{im}V^j\nabla_j h_{im} d\mu  -2\int_M p^{im}(\nabla_i V^j)h_{jm} d\mu\\
    = & -\int_M p^{im}V^j\nabla_j h_{im} d\mu  +2\int_M (\nabla_ip^{im} h_{jm})V^j d\mu\\
    = & -\int_M p^{im}V^j(\nabla_j h_{im})d\mu +2\int_M p^{im}(\nabla_i h_{jm})V^j d\mu \\ 
    & \qquad +2\int_M  (\nabla_i p^{im})h_{jm}V^j d\mu \label{last-term}
\end{align}
That is,
\begin{equation*}
\label{J-in-coords}
    J_2(h,p)_j = 2  (\nabla_i p^{im})h_{jm}  +2p^{im}(\nabla_i h_{jm})-p^{im}(\nabla_j h_{im})
\end{equation*}
This provides the formula in the proposition. This is a smooth mapping, since $s>n/2+2$ and since the covariant derivative is a smooth mapping of tensors.
\end{proof}


%
%
%

\section{Smoothness of the gradient flow} \label{exist-and-smoothness-section}

We have thus far discussed the geometry of the gradient flow. 
In this section we turn to the question of existence of the flow, leading up to a proof of Theorem~\ref{teo:local-existence}.

Define
\begin{equation}
    \begin{aligned}
        \tau\colon  \cD^{s}(M) &\to H^s(M,\R)\times\Met^{s-1}(M)\times H^s(M,\R)\times S_2^{s-1} \\
         \vp &\mapsto \big(I_0\circ\vp^{-1}, \vp_*g, I_0\circ\vp^{-1}-I_1, \sigma(\vp_*g-g)\big).
    \end{aligned}
\end{equation}%
Combined with the momentum map $J$ obtained in Lemma~\ref{prop:Hs-momentum-map} and Lemma~\ref{prop:Met-momentum-map} this yields the mapping
\begin{equation}
\label{eq:F-mapping}
\begin{split}
    F \colon \cD^{s}(M) &\to \X^{s-2}(M)\\
    \vp &\mapsto F(\vp)\coloneqq (J\circ \tau)(\vp).
\end{split}
\end{equation}
The gradient flow \eqref{gradient_explicit} can be written in terms of this mapping as 
\begin{equation}\label{eq:Frhs}
    \dot\vp = -(A^{-1}F(\vp))\circ\vp .
\end{equation}
The coordinate expression~\eqref{J-in-coords} for $J_2$ gives
\begin{equation}
\begin{aligned} 
      F(\varphi)_k =& - (I_0\circ\vp^{-1}-I_1)\nabla_k(I_0\circ\vp^{-1}) + 2\sigma g^{il}g^{jm}(\varphi_*g-g)_{lm}\nabla_i(\varphi_*g)_{kj} + \\ & 2\sigma g^{il}g^{jm}(\varphi_*g-g)_{lm}\nabla_i(\varphi_*g)_{kj} -\sigma g^{il}g^{jm}(\varphi_*g-g)_{lm}\nabla_k(\varphi_*g)_{ij}.
\end{aligned}
\end{equation}
%
The pushforward $\vp_*g$ uses one derivative of $\vp^{-1}$ so by counting derivatives we get that $F$ is a second-order non-linear differential operator in $\vp^{-1}$; this observation is the first key in the existence analysis.

The second key is to factorize the right-hand side of \eqref{eq:Frhs} as a composition of two smooth mappings.
To this end, let $T\cD^{s-k}(M)\upharpoonright \cD^{s}(M)$ denote the restriction of the tangent bundle $T\cD^{s-k}(M)$ to the base $\cD^{s}(M)$, and define $\tilde{F}$ by 
\begin{equation}
\label{eq:J-tilde}
\begin{split} 
    \tilde{F} :\cD^{s}(M) &\to T\cD^{s-2}(M)\upharpoonright\cD^{s}(M)\\
    \varphi &\mapsto \left(\varphi, F (\varphi)\circ\varphi\right).
\end{split}
\end{equation}

\begin{lem}
\label{lem:F-tilde}
Let $s>2+n/2$.
Then the mapping $\tilde{F}$ is smooth.
\end{lem}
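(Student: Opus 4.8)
The plan is to exploit the factorization $\tilde F(\varphi) = (\varphi, F(\varphi)\circ\varphi)$ and the ``right-translate-everything'' trick that makes $F(\varphi)\circ\varphi$ a \emph{composition} of smooth maps on $\cD^s(M)$, even though $F$ by itself loses two derivatives. First I would rewrite each ingredient of $F(\varphi)$ so that it appears explicitly as a field pulled back by $\varphi$. Concretely, $I_0\circ\varphi^{-1}$, its gradient, and $\varphi_*g$ with its covariant derivatives are all smooth tensor fields of class $H^{s-1}$ or $H^{s-2}$ living ``upstairs''; composing them with $\varphi$ on the right brings them down to $H^{s-2}$ objects that depend on $\varphi$. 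The key observation is that $\big(I_0\circ\varphi^{-1}\big)\circ\varphi = I_0$ (constant in $\varphi$, hence trivially smooth), $\big(\nabla(I_0\circ\varphi^{-1})\big)\circ\varphi$ is the pullback of a $1$-form built from $\varphi^{-1}$ by one derivative and then composed back with $\varphi$, and $(\varphi_*g)\circ\varphi$ together with $(\nabla^{\varphi_*g}\varphi_*g)\circ\varphi$ are algebraic-plus-one-derivative expressions in $D\varphi$ and $D\varphi^{-1}\circ\varphi$. So the real content is: each factor of $F(\varphi)\circ\varphi$ is a fixed smooth fiberwise (pointwise-algebraic or one-jet) function applied to the data $\varphi$, $D\varphi$, $\varphi^{-1}$, $D\varphi^{-1}$.

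Next I would organize this as a chain of the standard smooth building blocks available in the Sobolev category. Right composition $\omega_\psi$ by a \emph{fixed} smooth map is $C^\infty$ by the Omega Lemma (Lemma~\ref{omega-lemma}); right composition $\alpha_\varphi$ in the $\varphi$-slot by an $H^s$ argument is $C^\infty$ by the Alpha Lemma (Lemma~\ref{alpha-lemma}); inversion $\varphi\mapsto\varphi^{-1}$ is $C^l$ as a map $\cD^{s+l}\to\cD^s$, but the crucial point is that we only ever need $\varphi^{-1}$ composed on the right with $\varphi$, and $\varphi^{-1}\circ\varphi = \id$, while $D\varphi^{-1}\circ\varphi = (D\varphi)^{-1}$ pointwise (from \eqref{eq:d-id}), which is a smooth fiberwise-algebraic map of $D\varphi$ since the fiberwise inversion of an invertible matrix is smooth and $\varphi\mapsto D\varphi$ is a bounded linear (hence smooth) map $H^s\to H^{s-1}$. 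Finally, pointwise tensor contractions, products, and the smooth fiber map ``matrix inverse'' all induce smooth maps on the relevant Sobolev sections because $s-2 > n/2$ makes $H^{s-2}$ a Banach algebra under pointwise operations (the Sobolev embedding / module structure, as used already in Lemma~\ref{prop:Hs-momentum-map} and Lemma~\ref{prop:Met-momentum-map}). Assembling: $\varphi\mapsto(\varphi, D\varphi, (D\varphi)^{-1})$ is smooth into $H^s\times H^{s-1}\times H^{s-1}$, the momentum-map expressions for $J_1$ and $J_2$ in \eqref{J-in-coords} are smooth fiber-preserving maps of these jets landing in $\X^{s-2}(M)$, and tacking on the base point $\varphi$ gives a smooth section of $T\cD^{s-2}(M)\upharpoonright\cD^s(M)$; that is precisely $\tilde F$.

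The main obstacle, and the step deserving the most care, is \emph{bookkeeping the derivative count} and confirming that after right-translation no term secretly needs $s$ derivatives of $\varphi$ or a derivative of $\varphi^{-1}$ that is not absorbed by the $\circ\varphi$. The delicate factors are those involving $\nabla(\varphi_*g)$: naively $\varphi_*g\in H^{s-1}$, so $\nabla(\varphi_*g)\in H^{s-2}$, and it contains second derivatives of $\varphi^{-1}$; one must check that $\big(\nabla(\varphi_*g)\big)\circ\varphi$ regroups — via the chain rule and \eqref{eq:d-id} — into terms that are smooth fiber functions of $(D\varphi, D^2\varphi, (D\varphi)^{-1})$ composed with nothing worse than $\id$, so that the worst object is $D^2\varphi\in H^{s-2}$, matching the target regularity exactly. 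Once this derivative-counting is verified term by term (using that the Christoffel symbols of $\varphi_*g$ are $H^{s-2}$ and multiply $H^{s-2}$ tensors within the $H^{s-2}$ module, legal since $s-2>n/2$), smoothness of $\tilde F$ follows formally from the chain rule for $C^\infty$ maps between Banach manifolds together with Lemmata~\ref{alpha-lemma} and \ref{omega-lemma}. I would therefore present the proof as: (i) rewrite $F(\varphi)\circ\varphi$ in right-invariant form; (ii) identify the finite list of smooth factors and the Sobolev indices they land in; (iii) invoke Alpha/Omega Lemmas, smoothness of fiberwise matrix inversion, and the $H^{s-2}$ module structure; (iv) compose.
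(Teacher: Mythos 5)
Your proposal is correct and follows essentially the same route as the paper: express $F(\varphi)\circ\varphi$ in local coordinates as a smooth fiberwise function of $\varphi^{-1}\circ\varphi=\id$, $D\varphi^{-1}\circ\varphi=(D\varphi)^{-1}$, and the chain-rule expression for $D^2\varphi^{-1}\circ\varphi$, then use pointwise smoothness of matrix inversion, the Sobolev multiplication threshold $s>2+n/2$, and the Omega Lemma to conclude. Your term-by-term bookkeeping of the $\nabla(\varphi_*g)$ contributions is somewhat more explicit than the paper's, but it is the same argument.
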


\begin{proof}
Expressed in local coordinates
\begin{multline*}
    F(\varphi)(x)_k = f_k\left(\varphi^{-1}(x)_1,\ldots,\varphi^{-1}(x)_n, \frac{\partial\varphi^{-1}_1}{\partial x_1}(x),\ldots,\frac{\partial\varphi^{-1}_n}{\partial x_n}(x),\right. \\ \left. \frac{\partial^2\varphi^{-1}_1}{\partial x_1\partial x_1}(x),\ldots,\frac{\partial^2\varphi^{-1}_n}{\partial x_n\partial x_n}(x)\right)
\end{multline*}
for smooth functions $f_k$ defined on some open subset of $\mathbb{R}^n\times \mathbb{R}^{n\times n}\times \mathbb{R}^{n\times n\times n}$ with $n=\dim M$.
The mapping 
\[
    \cD^s(M) \ni \varphi \mapsto \frac{\partial \varphi^{-1}_i}{\partial x_j}\circ\varphi \in H^{s-1}(M)
\]
is smooth, because $D\varphi^{-1}\circ\varphi (x) = (D\varphi(x))^{-1}$ and matrix inverse is a smooth, point-wise operation.
From the chain rule notice that
\[
    D^2\varphi^{-1}\circ\varphi(x) = D(D\varphi^{-1}\circ\varphi)(x)(D\varphi(x))^{-1}= D(D\varphi)^{-1}(x)(D\varphi(x))^{-1}.
\]
The mapping
\[
    \mathcal{D}^s \ni \varphi \mapsto \frac{\partial^2 \varphi^{-1}_i}{\partial x_{j}\partial x_k}\circ\varphi \in H^{s-2}(M)
\]
is smooth if $s>2+n/2$ (so that $D(D\varphi)^{-1}$ is above the Sobolev embedding threshold and therefore can be smoothly multiplied with elements in $H^{s-1}(M)$).
Since $f_k$ are smooth mappings, the full composition $\tilde F(\varphi) = (\varphi, F(\varphi)\circ\varphi)$ is smooth by the Omega Lemma \ref{omega-lemma}.
\end{proof}

\begin{rem}
    By a small modification we see that whenever $F$ is a smooth differential operator of order $k$ in $\vp^{-1}$, then Lemma~\ref{lem:F-tilde} is valid for $s>k+n/2$.
\end{rem}

We stress that the result in Lemma~\ref{lem:F-tilde} is quite remarkable, given that the inversion in $\cD^s(M)$ is not smooth (only continuous), so the mapping $F$ in \eqref{eq:J-tilde} is certainly \emph{not} smooth; it is the post-composition with $\vp$ together with the chain-rule that saves the situation for $\tilde F$.
However, we have only gone halfway: we now need to formulate the gradient flow in terms of $\tilde F$ to exploit the smoothness.
To accomplish that we repeat for the inertia operator $A$ what we did for $F$.
Indeed, define $\tilde A$ by
\begin{equation}
    \begin{aligned}
        \tilde A\colon T\cD^s(M) &\to T\cD^{s-2}(M)\upharpoonright\cD^{s}(M)\\
        (\varphi,\dot\varphi) &\mapsto \left(\varphi, (A(\dot\vp\circ\vp^{-1}))\circ\vp\right).
    \end{aligned}
\end{equation}
Then the gradient flow \eqref{gradient_explicit} can be written
\begin{equation}\label{eq:gradient_flow_smooth_formulation}
    \dot\vp = - (\tilde A^{-1}\circ \tilde F)(\varphi).
\end{equation}
Composition of smooth maps is smooth, and we already know that $\tilde F$ is smooth, so it remains to prove that $\tilde A$ is smooth.
For this, we use a small modification of a result by Ebin and Marsden~\cite{ebin1970groups} (see also \cite[Lemma~3.2]{modin2015generalized}).


\begin{lem} 
\label{lem:bundle-iso}
	Let $B:\X^s(M)\to\X^{s-q}(M)$ be an elliptic differential operator of order $q$ and  let $s>q+n/2$. 
    Assume further that $B$ is an isomorphism.
    Then the operator $\tilde{B}:T\cD^s(M)\to T\cD^{s-q}(M)\upharpoonright \cD^{s}(M)$ given by
    \begin{equation*}
        \tilde B \colon (\vp,\dot\vp)\mapsto\Big(\varphi,\big(B(\dot{\varphi}\circ\vp^{-1})\big)\circ\vp\Big)
    \end{equation*} 
    is a smooth vector bundle isomorphism.	
\end{lem}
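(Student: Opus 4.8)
The plan is to verify the three things packaged into the statement: that $\tilde B$ is well-defined and maps into the asserted bundle, that it is fiberwise a linear isomorphism, and that it is a smooth map of Hilbert manifolds (hence a smooth vector bundle isomorphism over the identity on $\cD^s(M)$). The first point is immediate: for fixed $\varphi\in\cD^s(M)$ and $\dot\varphi\in T_\varphi\cD^s(M)$, write $\dot\varphi = \xi\circ\varphi$ with $\xi = \dot\varphi\circ\varphi^{-1}\in\X^s(M)$; then $B\xi\in\X^{s-q}(M)$, so $(B\xi)\circ\varphi$ is an element of $H^{s-q}(M,TM)$ projecting to $\varphi$, i.e.\ an element of $T_\varphi\cD^{s-q}(M)$, which is the fiber of $T\cD^{s-q}(M)\upharpoonright\cD^s(M)$ over $\varphi$. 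Fiberwise, $\tilde B$ is the conjugate $R_\varphi\circ B\circ R_{\varphi^{-1}}$ of the linear operator $B$ by right translations, so it is linear in $\dot\varphi$; since $B\colon\X^s(M)\to\X^{s-q}(M)$ is assumed to be an isomorphism, each fiber map is a bounded linear bijection, with inverse $\dot\psi\mapsto (B^{-1}(\dot\psi\circ\varphi^{-1}))\circ\varphi$.

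The substantive part is smoothness of $\tilde B$ as a map of Hilbert manifolds. The obstruction to attacking this directly is exactly the one flagged for $\tilde F$: the map $\varphi\mapsto\varphi^{-1}$ is only continuous on $\cD^s(M)$, so $\dot\varphi\mapsto\dot\varphi\circ\varphi^{-1}$ loses derivatives and is not smooth; it is only the post-composition with $\varphi$ again, exploiting the chain rule, that restores smoothness. The standard device (Ebin--Marsden~\cite{ebin1970groups}, and \cite[Lemma~3.2]{modin2015generalized}) is to express $\tilde B$ in a local trivialization of $T\cD^s(M)$ and observe that, because $B$ is a \emph{differential} operator of order $q$, the quantity $(B(\dot\varphi\circ\varphi^{-1}))\circ\varphi$ depends only on $\varphi$, the derivatives $D\varphi,\dots,D^q\varphi$ evaluated along $\varphi$, and on $\dot\varphi$ and its derivatives up to order $q$ — all composed with $\varphi$. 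Writing $B$ in coordinates with coefficients built from the fixed metric $g$, each coordinate component of $\tilde B(\varphi,\dot\varphi)$ is a universal smooth (in fact polynomial, after inverting the Jacobian matrix $D\varphi$) expression in: the entries of $D\varphi\circ\varphi^{-1}\circ\varphi = D\varphi$, their inverses $(D\varphi)^{-1}$, higher derivatives $D^j\varphi$ for $j\le q$, and the derivatives $D^j\dot\varphi$ for $j\le q$, all of which are sections of trivial bundles of class at least $H^{s-q}$. Since $s>q+n/2$, the factors of order $\le q-1$ (including $(D\varphi)^{-1}$) lie strictly above the Sobolev embedding threshold $H^{s-q+1}\hookrightarrow C^0$ wait — more carefully, the highest-order factors are in $H^{s-q}$ and all lower-order factors are in $H^{s-q+1}\subset C^0$, so $H^{s-q}$ is a module over the ring generated by them and pointwise multiplication is continuous and bilinear, hence smooth; matrix inversion of $D\varphi$ is a smooth pointwise operation on $H^{s-1}\subset C^0$ matrices with positive determinant. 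Assembling these smooth pointwise algebraic operations and then applying the Omega Lemma~\ref{omega-lemma} for the final post-composition structure, one concludes $\tilde B$ is smooth. The argument is essentially identical to the proof of Lemma~\ref{lem:F-tilde}, only now $B$ is linear in $\dot\varphi$, which if anything simplifies matters.

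Finally, smoothness of $\tilde B^{-1}$ follows by applying the same argument to $B^{-1}$: since $B$ is an elliptic isomorphism of order $q$, its inverse $B^{-1}\colon\X^{s-q}(M)\to\X^s(M)$ is a pseudodifferential operator of order $-q$, so it does not lose derivatives, and the corresponding map $\tilde{B^{-1}}\colon (\varphi,\dot\psi)\mapsto(\varphi,(B^{-1}(\dot\psi\circ\varphi^{-1}))\circ\varphi)$ on $T\cD^{s-q}(M)\upharpoonright\cD^s(M)\to T\cD^s(M)$ is smooth by the same local-coordinate/Omega-Lemma reasoning (here using elliptic regularity to control that $B^{-1}$ maps $H^{s-q}$ boundedly to $H^s$, and that its Schwartz-kernel dependence on the base coefficients is smooth); alternatively one invokes the Ebin--Marsden argument verbatim. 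Since $\tilde B$ and $\tilde{B^{-1}}$ are mutually inverse smooth bundle maps covering $\id_{\cD^s(M)}$ and linear on fibers, $\tilde B$ is a smooth vector bundle isomorphism. I expect the one genuine subtlety to be the bookkeeping of which factors sit at which Sobolev level so that the module/multiplication structure applies — i.e.\ checking that $s>q+n/2$ is exactly the threshold making every pointwise product in the coordinate expression land back in $H^{s-q}$ — together with the regularity input needed to handle $B^{-1}$, which is where ellipticity of $B$ (not merely its being an isomorphism) is used.
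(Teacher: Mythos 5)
Your verification that $\tilde B$ is well defined, fiberwise a linear bijection, and smooth as a bundle map is correct and follows the same route as the paper: the paper simply cites Ebin--Marsden for smoothness of $\tilde B$, and your coordinate/Omega-Lemma argument is the standard way to fill that in, parallel to the proof of Lemma~\ref{lem:F-tilde}. The genuine gap is in your treatment of the inverse. Your primary argument is to ``apply the same argument to $B^{-1}$,'' but that argument is unavailable: the local-coordinate reduction works precisely because $B$ is a \emph{differential} operator, so $(B(\dot\vp\circ\vp^{-1}))\circ\vp$ at a point $x$ is a pointwise smooth function of finitely many derivatives of $\vp$ and $\dot\vp$ at $x$. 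The inverse $B^{-1}$ is a nonlocal (pseudodifferential) operator, so $(B^{-1}(\dot\psi\circ\vp^{-1}))\circ\vp$ is not a pointwise function of jets, the chain-rule/Omega-Lemma mechanism does not apply, and the appeal to ``Schwartz-kernel dependence on the base coefficients'' is not an argument. Your fallback (``invoke Ebin--Marsden verbatim'') is exactly where the content lies, so it cannot be left as an alternative aside.

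The paper closes this gap differently and without ever touching $B^{-1}$ as an operator conjugated by composition: having established that $\tilde B$ is a smooth bundle map, it computes the derivative of $\tilde B$ at $(\vp,\dot\vp)$ and observes that it is block lower triangular, with $\id$ and the fiberwise isomorphism $B_\vp = TR_\vp\circ B\circ TR_{\vp^{-1}}$ on the diagonal, hence a Banach space isomorphism; the inverse function theorem on Banach manifolds then gives that $\tilde B$ is a diffeomorphism, i.e.\ has a smooth inverse. An equivalent repair of your argument is to note that in a local trivialization $\tilde B$ is represented by a smooth map into $L(\X^s(M),\X^{s-q}(M))$ with values in the invertible operators, and operator inversion is smooth on that open set; either of these replaces the step where you try to rerun the coordinate argument for $B^{-1}$. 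With that substitution your proof is complete; the rest of your Sobolev bookkeeping (all factors lie in $H^{s-q}\hookrightarrow C^0$ since $s-q>n/2$, so pointwise products and inversion of $D\vp$ are smooth) is consistent with the paper.
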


\begin{proof}
	The operator $B$ is a smooth isomorphism from vector fields of class $H^s$ to vector fields of class $H^{s-q}$ and $\tilde{B}$ is a smooth bundle map, see \cite{ebin1970groups}. 
	It remains to show that there exists an inverse.
	Let us use the notation
	$$
	\tilde{B}(\varphi,\dot\varphi)=(\tilde{b_1}(\varphi,\dot{\varphi}),\tilde{b_2}(\varphi,\dot{\varphi})).
	$$
	Then, the (total) derivative of $\tilde{B}$ is the matrix
	$$
	\begin{bmatrix}
	  \partial\tilde{b_1}/\partial\varphi &  \partial\tilde{b_1}/\partial\dot\varphi \\
	  \partial\tilde{b_2}/\partial\varphi & \partial\tilde{b_2}/\partial\dot\varphi 
	\end{bmatrix}
	$$
	For instance, writing $\dot\varphi=\xi\circ\varphi$ we have $\tilde{b}_2(\varphi,\dot\varphi)= \left(B(\dot{\varphi}\circ\varphi^{-1})\right)\circ\varphi = TR_\varphi (B\xi).$ 
	Thus,
	$$
	\frac{\partial \tilde{b_2}}{\partial \varphi} =D (B\xi)\circ\varphi =TR_\varphi D(B\xi).
	$$
	At $\varphi\in\cD^s(M)$, the derivative of $\tilde{B}$ is  
	\begin{equation}
	\label{eq:dA}
	\begin{bmatrix}
	  {\id}_{\cD^s(M)} &  0 \\
	  D(B\xi)\circ\varphi & B_\varphi 
	\end{bmatrix},
	\end{equation}
	This is lower triangular and therefore invertible. It is regular.
	By the inverse mapping theorem for Banach spaces, see~\cite{lang1999}, $\tilde{B}$ is a smooth isomorphism. Indeed, the inverse of~(\ref{eq:dA}) is a (smooth) map given by
	\begin{equation}
	\begin{bmatrix}
	{\id}_{\cD^s(M)} &  0 \\
	-(B_\varphi)^{-1}\circ D(B\xi)\circ\varphi & (B_\varphi)^{-1}
	\end{bmatrix}
	\end{equation}
	where $(B_\varphi)^{-1}=TR_\varphi\circ B^{-1}\circ TR_{\varphi^{-1}}$.
\end{proof}


We are now ready to prove Theorem \ref{teo:local-existence}.

\begin{proof}[Proof of Theorem \ref{teo:local-existence}]
    Recall the key: that we can factorize $\nabla E(\vp)$ as
    \begin{equation*}
        \nabla E(\vp) = (\tilde A^{-1}\circ \tilde F)(\varphi).
    \end{equation*}
    From the two previous lemmata the mapping $\nabla E$ is a smooth vector field on $\cD^s(M)$ (since the inertia operator $A=(1-\alpha \Delta)^k$ for $\alpha>0$ and $k\in\mathbb{N}$ fulfills the requirements of Lemma~\ref{lem:bundle-iso}).
    Given that $\cD^s(M)$ is a Hilbert manifold, the existence of a local flow for equation~(\ref{eq:grad-flow-intro}) up to a maximal time $T>0$ follows from the Picard-Lindelöf theorem of ODE theory on Banach spaces (cf.\ \cite[Ch.~IV]{lang1999}).
    This theory also implies that the local solution depends smoothly on the initial conditions.

    For the global result, we follow precisely the same steps as in the finite-dimensional case in Theorem~\ref{teo:global-existence-finite}.
    If $k\geq s$ then the Riemannian metric defined by the inertia operator $A$ is strong enough to dominate the topology of $\cD^s(M)$.
    The estimates obtained as in Theorem~\ref{teo:global-existence-finite} thus exclude the possibility of blowup as $t\to T$.
    Since the general theory of maximal solutions of ODEs is valid also in the Banach category, we conclude that $T=\infty$ in this case.
\end{proof}

\bibliographystyle{amsalpha-cjk} 
\bibliography{refs.bib}
\vspace{2em}

\end{document}